\newtheorem{prop}{Proposition}
\newtheorem{lemma}{Lemma}
\newtheorem{corollary}{Corollary}
\newtheorem{theorem}{Theorem}
\theoremstyle{definition}
\newtheorem{remark}{Remark}
\newtheorem{example}{Example}
\def\rmd{\mathrm{d}}
\date{\today}
\begin{document}
\title[Quadratic variation of the sum of two Hermite processes]{Asymptotic behavior of the quadratic variation of the sum of two Hermite processes of consecutive orders }
\author{M. Clausel}
\address{M. Clausel, Laboratoire Jean Kuntzmann\\
Universit\'e de Grenoble--Alpes, CNRS\\
F38041 Grenoble Cedex 9}
%\curraddr{}
\email{marianne.clausel@imag.fr}
%\thanks{}

%    author two information
\author{F. Roueff}
\address{Institut Mines--Telecom, Telecom ParisTech, CNRS LTCI, 46 rue Barrault\\
               75634 Paris Cedex 13, France}
%\curraddr{}
\email{roueff@telecom-paristech.fr}

\author{M.~S. Taqqu}
\address{M.~S. Taqqu, Departement of Mathematics and Statistics, Boston University\\
  Boston, MA 02215, USA}
%\curraddr{}
\email{murad@math.bu.edu}

\author{C. Tudor}
\address{C.  Tudor, Laboratoire Paul Painlev\'e, UMR 8524 du CNRS, Universit\'e
  Lille 1, 59655 Villeneuve d'Ascq, France and Department of Mathematics, Academy of Economical Studies, Bucharest, Romania.}
%\curraddr{}
\email{Ciprian.Tudor@math.univ-lille1.fr}

\subjclass[2010]{Primary:  60G18 , 60G22 ; Secondary: 60H05}

\keywords{Hermite processes ; quadratic variation ; covariation ; Wiener chaos ;
self-similar processes ; long--range dependence.}

\maketitle

\begin{abstract}
  Hermite processes are self--similar processes with stationary increments
  which appear as limits of normalized sums of random variables with long range
  dependence. The Hermite process of order $1$ is fractional Brownian motion
  and the Hermite process of order $2$ is the Rosenblatt process.  We consider
  here the sum of two Hermite processes of order $q\geq 1$ and $q+1$ and of
  different Hurst parameters. We then study its quadratic variations at
  different scales. This is akin to a wavelet decomposition. We study both the
  cases where the Hermite processes are dependent and where they are
  independent. In the dependent case, we show that the quadratic variation,
  suitably normalized, converges either to a normal or to a Rosenblatt
  distribution, whatever the order of the original Hermite processes.
\end{abstract}
\section{Introduction}\label{s:intro}
The (centered) quadratic variation of a process $\{Z_t,t\geq 0\}$ is usually defined as
\begin{equation}\label{1}
V_{N}(Z) = \sum_{i=0} ^{N-1} \left[(Z_{t_{i+1}} -Z_{t_{i}})^{2}- \mathbb{E}(Z_{t_{i+1}} -Z_{t_{i}})^{2}\right]\;.
\end{equation}
where $0=t_0<t_1<\cdots<t_N$.  The quadratic variation plays an important role
in the analysis of a stochastic process, for various reasons. For
example, for Brownian motion and martingales, the limit of the sequence
(\ref{1}) is an important element in the It\^o stochastic calculus.  Another
field where the asymptotic behavior of (\ref{1}) is important is estimation
theory: for self-similar processes the quadratic variations are used to
construct consistent estimators for the self-similarity
parameter. The limit in distribution of the sequence $V_{N}$
yields the asymptotic behavior of the associated estimators (see
e.g. \cite{guyon87}, \cite{lang:istas:1997},\cite{lang01}, \cite{coeurjolly:2001}, \cite{taqqu:zhu:2005},~\cite{tudor:2013},
\cite{tudor:viens:2009}). Quadratic variations (and their generalizations) are
also crucial in mathematical finance (see e.g. \cite{BGS}), stochastic analysis
of processes related with fractional Brownian motion (see e.g. \cite{CNW},
\cite{NNT}) or numerical schemes for stochastic differential equations (see
e.g. \cite{neuenkirch:nourdin:2007}). Variations of sums of independent Brownian motion
and fractional Brownian motion are considered in \cite{dozzi13}. The asymptotic behavior of the
quadratic variation of a single Hermite process has been studied in
\cite{chronopoulou:tudor:viens:2011}.

Our purpose is to study the asymptotic behavior of the quadratic variation of a
sum of two dependent Hermite processes of consecutive orders. One could consider other
combinations. We focus on this one because it already displays interesting
features. It shows that the quadratic variation, suitably normalized, converges
either to a normal or to a Rosenblatt distribution, whatever the order of the
original Hermite processes. This would not be the case if only one Hermite
process of order at least equal to two were considered, since then the limit
would always be a Rosenblatt distribution. This would also not be the case if
one considered the sum of two independent Hermite processes. We show indeed that in
the independent case, the quadratic variation asymptotically behaves as that of
a single Hermite process.

We will thus take the process $Z$ in (\ref{1}) to be
$$
Z= Z ^{q,H_{1}}+ Z^{q+1, H_{2}}\;,
$$
where $Z^{q,H}$ denotes a Hermite process of order $q\geq 1$ and with
self-similarity index $ H\in \left( \frac{1}{2}, 1\right)$.  Hermite processes
are self-similar processes with stationary increments and exhibit long-range
dependence.  The Hermite process of order $q\geq 1$ can be written as a
multiple integral of order $q$ with respect to the Wiener process and thus
belongs to the Wiener chaos of order $q$.

We will consider an interspacing
\[
t_{i}-t_{i-1}=\gamma_N
\]
which may depend on $N$. The interspacing $\gamma_N$ may be fixed (as in a time
series setting), grow with $N$ (large scale asymptotics) or decrease with $N$
(small scale asymptotics). The case $\gamma_N=1/N$ is referred to as
\emph{in-fill} asymptotics.
From now on, the expression of $V_N(Z)$ reads
\begin{equation}\label{e:qv}
V_N(Z)=\sum_{i=0} ^{N-1} \left[(Z_{\gamma_N(i+1)} -Z_{\gamma_N i})^{2}- \mathbb{E}(Z_{\gamma_N (i+1)} -Z_{\gamma_N i})^{2}\right]\;.
\end{equation}
Such an interspacing was also considered in~\cite{taqqu:zhu:2005} when studying
the impact of the sampling rate on the estimation of the parameters of
fractional Brownian motion. Since we consider here the sum of two self-similar processes,
one with self-similarity  index $H_1$, the other with self-similarity  index $H_2$,
we expect to find several regimes depending on the growth or decay of $\gamma_N$
with respect to $N$. It seems indeed reasonable to expect that, if $H_1>H_2$
the first process will dominate at large scales and be negligible at small
scales, and the opposite if $H_1<H_2$. Our analysis will in fact exhibit an
intermediate regime between these two. When $H_1=H_2$, it is not clear
whether one term should or should not dominate the other one.
% For example, we can have
% \[
% \gamma_N=1,\quad\gamma_N=\frac{1}{N},\quad\gamma_N=N,\quad\gamma_N=\mathrm{e}^{-N},\quad\gamma_N=\mathrm{e}^{N}\;.
% \]
%Our work is motivated by the several  facts. Few is known about the behavior
%of the quadratic variation of the sum of two processes $X$ and $Y$  which are
%not semimartingales.

The quadratic variation of the sum $Z=X+Y$ can obviously be decomposed into the
sum of the quadratic variations of $X$ and $Y$ and the so-called quadratic
covariation of $X$ and $Y$ which is defined by
\[
V_{N}(X,Y):=\sum_{i=0} ^{N-1}(X_{t_{i+1}}-X_{t_{i}})(Y_{t_{i+1}}-Y_{t_{i}})
\]
with $0=t_{0}<t_{1}<\dots<t_{N}$. The quadratic covariation shall play a
central role in our analysis. The case where $X= Z ^{H_{1},q}$ and $Y=Z
^{H_{2},q+1}$ are Hermite processes of consecutive orders, exhibits an
interesting situation. If the two processes are independent (that is, they are
expressed as multiple integrals with respect to independent Wiener processes),
then the quadratic covariation of the sum is always dominated by one of the two
quadratic variations $V_{N}(X)$ or $V_{N}(Y)$. On the other hand, surprisingly,
we highlight in this paper that when the two processes are dependent (they can
be written as multiple integrals with respect to the same Wiener process), then
it is their quadratic covariation which may determine the asymptotic behavior
of $V_{N}(X+Y)$. We also find that there is a range of values for the
interspacing $\gamma_N$ where the limit is Rosenblatt in the independent case
and Gaussian in the dependent case. The range includes the choice $\gamma_N=1$
for a large set of $(H_1,H_2)$, as illustrated by the domain $\nu_1<0$ in
Figure~\ref{fig:nu1sign}.

A primary motivation for our work involves the analysis
of wavelet estimators. Of particular interest is the case $H_{2}=2H_{1}-1$
which is related to an open problem in
\cite{clausel:roueff:taqqu:tudor:2012}. See Example~\ref{ex4} for details.

The paper is organized as follows. Section~\ref{prel} contains some
preliminaries on Hermite processes and their properties.  The main results are
stated in Section~\ref{s:main}. The asymptotic behavior of $V_N(Z)$ is given
and illustrated in Section~\ref{sec:asympt-behav-quadr}. The proofs of the main theorem and
propositions are given in Section~\ref{s:proof:VN3} while
Section~\ref{s:lemmas} contains some technical lemmas. Basic facts about
multiple It\^o integrals are gathered in Appendix~\ref{s:appendix}.

\section{Preliminaries}
\label{prel}
Recall that a process $\{X_t,t\geq 0\}$ is self--similar with index $H$ if for
any $a>0$, $\{X_{at},t\geq 0\}$ has the same finite-dimensional distributions
as $\{a^H X_t,t\geq 0\}$. Hermite processes $\{Z^{H,q}_t,t\geq 0\}$, where
$H\in (1/2,1)$, $q=1,2,\cdots$ are self--similar processes with stationary
increments. They appear as limits of normalized sums of random variables with
long--range dependence. The parameter $H$ is the self--similar parameter and
the parameter $q$ denotes the order of the process. The most common Hermite
processes are the fractional Brownian motion $B^H=Z^{H,1}$ (Hermite process of
order $1$) and the Rosenblatt process $R^H=Z^{H,2}$ (Hermite process of order
$2$). Fractional Brownian motion (fBm) is Gaussian but all the other Hermite
processes are non--Gaussian. On the other hand, because of self--similarity and
stationarity of the increments, they all have zero mean and the same
covariance
\[
\mathbb{E}\left[Z^{H,q}_{t_1}Z^{H,q}_{t_2}\right]=\frac{1}{2}\left[|t_1|^{2H}+|t_2|^{2H}-|t_1-t_2|^{2H}\right]\;,
\]
hence $\mathbb{E}\left[(Z^{H,q}_{1})^2\right]=1$. Consequently, the covariance of their increments decays slowly to zero as the lag tends to infinity, namely
\[
\mathbb{E}\left[(Z^{H,q}_{t+1}-Z^{H,q}_{t})(Z^{H,q}_{s+t+1}-Z^{H,q}_{s+t})\right]\sim H(2H-1)s^{2H-2}\mbox{ as }s\to\infty\;.
\]
Observe that the sum over $s\geq 1$ of these covariances diverges, which is an
indication of ``long--range'' dependence.

The Hermite processes $\{Z^{H,q}_{t},t\geq 0\}$ can be represented by Wiener--It\^{o} integrals (see Appendix~\ref{s:appendix} for more details about stochastic integrals), namely
\begin{equation}\label{e:zq}
Z^{H, q} _{t}=c(H,q)\;I_q(L_{t}^{H, q}):=c(H,q)\;\int_{\mathbb{R}^{q}}'L_{t}^{H, q}(y_{1},\cdots, y_{q})\rmd B_{y_{1}}\cdots \rmd B_{y_{q}}\;,
\end{equation}
where  $c(H,q)$ is a positive normalizing constant, $B$ represents standard
Brownian motion and where the kernel is defined by
\begin{equation}\label{e:kern}
L_{t}^{H, q}(y_{1},\cdots, y_{q})= \int_{0} ^{t}(u-y_{1})_{+} ^{-(\frac{1}{2}+\frac{1-H}{q})}\cdots (u-y_{q})_{+}^{-(\frac{1}{2}+\frac{1-H}{q})}\rmd u\;.
\end{equation}
The prime on the integral~(\ref{e:zq})
indicates that one does not integrate over the ``diagonals'', where at least
two entries of the vector $(y_1,\dots,y_q)$ are equal.
Observe that the kernel $L_{t}^{H, q}$ is symmetric and has a finite
$L^2(\mathbb{R}^q)$ norm $\|L_t^{H,q}\|_2<\infty$ because $H\in (1/2,1)$. The
Hermite process $\{Z^{H,q}_{t},t\geq 0\}$ is then well--defined. It has mean
zero and variance
\[
\mathbb{E}\left[(Z^{H,q}_{t})^2\right]=c^2(H,q)\;q!\|L_t^{H,q}\|_2^2\;.
\]
In order to standardize the Hermite process, the positive normalizing constant
$c(H,q)$ is defined by
\begin{equation}
  \label{eq:norm-constant}
   c(H, q) = \left(q!\|L_1^{H,q}\|_2^2\right)^{-1/2}\;,
\end{equation}
so that $\mathbb{E}\left[(Z^{H,q}_{t})^2\right]=t^{2H}$ for all $t\geq0$.

The  fractional Brownian motion is obtained by setting $q=1$ and denoted by
\[
B^H_t=Z^{H,1}_{t}:=c(H, 1)\int_{\mathbb{R}}\left(\int_{0} ^{t}(u-y)_{+}^{H-3/2}\rmd u\right)\rmd B_{y_{1}}\;,
\]
while the Rosenblatt process is obtained by setting $q=2$  and denoted by
\[
R^H_t=Z^{H,2}_{t}:=c(H,2)\int_{\mathbb{R}^{2}}'\left(\int_{0}^{t}(u-y_{1})_{+} ^{H-1}(u-y_{2})_{+}^{H-1}\rmd u\right)\rmd B_{y_{1}}\rmd B_{y_{2}}\;.
\]
The (marginal) distribution $B^H_1$ of the standard fractional Brownian motion
$B^H_t=Z^{H,1}_t$ when $t=1$ is $\mathcal{N}(0,1)$ and the distribution $R^H_1$
of the standard Rosenblatt process $R^H_t$ when $t=1$ is called the Rosenblatt
distribution, see \cite{rosenblatt63} and \cite{veillette13} for more
information about that distribution. The normal distribution and the Rosenblatt
distribution will appear in the limit.

The asymptotic behavior of $V_N(X)$ where $X$ is an Hermite process, namely
$X=Z^{H,q}$ was studied in~\cite{chronopoulou:tudor:viens:2011}. The limit is
either the normal distribution or the Rosenblatt distribution. The normal
distribution appears in the limit when $X$ is the fractional Brownian motion
$Z^{H,1}$ with $H\in (1/2,3/4)$. The Rosenblatt distribution appears in the
limit when $X$ is the fractional Brownian motion with $H\in (3/4,1)$ or when
$Z^{H,q}$ is a Hermite process with $q\geq 2$ and $H\in (1/2,1)$. See
Theorem~\ref{thm:VN12} below, for a precise statement.

We shall focus on the simplest mixed model based on Hermite processes, that is,
\begin{equation}\label{e:mixed}
Z_{t}=Z_{t}^{H_{1},H_{2}}= Z_{t}^{H_{1},q} + Z_{t}^{H_{2}, q+1}\;,
\end{equation}
where $q\geq 1$ and $H_1,H_2\in (1/2,1)$. Processes of the type~(\ref{e:mixed})
appear naturally in the framework of long range dependent Gaussian subordinated
processes (see~\cite{clausel:roueff:taqqu:tudor:2012} and Example \ref{ex4}
below). Observe that~:
\begin{itemize}
\item $Z^{H,q}$ and $Z^{H,q+1}$ are defined in~(\ref{e:zq}) using the same
  underlying Brownian motion $B$ but different kernels $L_t$ are involved.
\item It follows from the previous point that $Z^{H,q}$ and $Z^{H,q+1}$ are
  uncorrelated but dependent, see \cite{bai13}.
\item $Z^{H_{1},H_{2}}$ is not self--similar anymore if $H_1\neq H_2$ but still
  has stationary increments.
\item In the quadratic variations~(\ref{e:qv}) cross--terms
\[
\left(Z^{H_1,q}_{\gamma_N(i+1)}-Z^{H_1,q}_{\gamma_N i}\right)\left(Z^{H_2,q+1}_{\gamma_N(i+1)}-Z^{H_2,q+1}_{\gamma_N i}\right)\;,
\]
will appear. We will show that their (renormalized) partial sum  is asymptotically normal.
\end{itemize}
The notation $\overset{(d)}{\longrightarrow}$ refers to the  convergence in
distribution and $a_{N}\ll  b_{N} $  means that $a_{N}=o(b_{N})$ as $N\to
\infty$ and $a_N\sim b_N$ means $a_N/b_N\to1$ as $N\to\infty$. The notation $X_N=o_P(1)$ means that $X_N\rightarrow 0$ in probability.

\section{Main results}\label{s:main}

\subsection{Main assumptions}

Throughout the paper, we consider $H_1,H_2\in(1/2,1)$ and an integer $q\geq1$,
\begin{equation}\label{e:vnq}
V_{N}:= V_N(Z) = \sum_{i=0} ^{N-1} \left[(Z_{t_{i+1}} -Z_{t_{i}} ) ^{2}- \mathbb{E}(Z_{t_{i+1}} -Z_{t_{i}} ) ^{2}\right]\;,
\end{equation}
where $t_i=\gamma_N i$ and $Z$ is the sum of the two Hermite processes
$Z^{H_1,q}$ and $Z^{H_2,q+1}$ as defined in~(\ref{e:mixed}).
The sum $V_{N}$ will be split into three terms as follows
\begin{equation}\label{e:splitVNq}
V_{N} = V_{N}^{(1)} + V_{N}^{(2)} + 2V_{N}^{(3)}\;,
\end{equation}
where
\begin{equation}
V_{N}^{(1)}= \sum_{i=0}^{N-1} \left[\left(Z^{H_{1},q}_{t_{i+1}}-Z^{H_{1}, q}_{t_{i}}\right)^{2}-\mathbb{E}\left(Z^{H_{1},q}_{t_{i+1}}-Z^{H_{1},q}_{t_{i}}\right)^{2}\right]\;,
\end{equation}
\begin{equation}
 V_{N}^{(2)} =\sum_{i=0}^{N-1} \left[\left(Z^{H_{2},q+1}_{t_{i+1}}-Z^{H_{2},q+1}_{t_{i}}\right)^{2}-\mathbb{E}\left(Z^{H_{2},q+1}_{t_{i+1}}-Z^{H_{2},q+1}_{t_{i}}\right)^{2}\right]\;,
\end{equation}
and
\begin{equation}\label{e:VN3}
V_{N}^{(3)}=\sum_{i=0}^{N-1}\left(Z^{H_{1},q}_{t_{i+1}}-Z^{H_{1},q} _{t_{i}}\right)\left(Z^{H_{2},q+1}_{t_{i+1}}-Z^{H_{2},q+1}_{t_{i}}\right)\;.
\end{equation}
The mean of the cross--term~(\ref{e:VN3}) vanishes because the terms in the
product are Wiener--It\^{o} integrals of different orders and hence are
uncorrelated (see formula (\ref{e:isom})).  We further denote the corresponding
standard deviations by
\begin{equation}
  \label{eq:sig2def}
  \sigma_N:=\left(\mathbb{E}\left[V_N^2\right]\right)^{1/2}\quad\text{and}\quad
  \sigma^{(i)}_N:=\left(\mathbb{E}\left[(V_N^{(i)})^2\right]\right)^{1/2}\quad\text{for $i=1,2,3$.}
\end{equation}
\subsection{Asymptotic behavior of $V_{N}^{(1)}$, $V_{N}^{(2)}$ and $V_{N}^{(3)}$}
To investigate the asymptotic behavior of $V_N$ and $\sigma_N$, we shall
consider the terms $V_{N}^{(1)}$, $V_{N}^{(2)}$ and $V_{N}^{(3)}$ separately,
without any assumption on the scale sequence $(\gamma_N)$.

First we recall well--known results about the asymptotic behavior of the
sequences $V_{N}^{(1)}$ and $V_{N}^{(2)}$.
\begin{theorem}\label{thm:VN12}
Denote
\begin{equation}
  \label{eq:h1def}
  h_1=\max\left(\frac{1}{2},1-\frac{2(1-H_1)}{q}\right)=
  \begin{cases}
    \frac12&\text{ if $q=1$ and $H_1\leq 3/4$}\\
1-\frac{2(1-H_1)}{q}&\text{ if $q\geq2$ or $H_1\geq 3/4$}
  \end{cases}
  \;,
\end{equation}
and
\begin{equation}
  \label{eq:deltadef}
\delta=\mathbbm{1}_{\{H_1=3/4\}\cap\{q=1\}}\;,
\end{equation}
that is, $\delta=1$ if $H_1=3/4$ and $q=1$, and $\delta=0$ if $H_1\neq3/4$ or
$q\geq2$. Then as $N\to\infty$,
\begin{equation}
  \label{e:sigma1}
 \sigma_N^{(1)}\sim a(q,H_1)\;\gamma_N^{2H_1}\;N^{h_1}\,(\log N)^{\delta/2} \;.
\end{equation}
Moreover, we have the following asymptotic limits as $N\to\infty$.
\begin{enumerate}
\item
\begin{enumerate}
\item \label{1a} If $q=1$ and $H_1\in (1/2,3/4]$, then
\begin{equation}
  \label{e:glim1a}
\frac{V_N^{(1)}}{\sigma^{(1)}_N}\overset{(d)}{\longrightarrow}\mathcal{N}(0,1)
% \quad\text{with}\quad
% \sigma_N^{(1)}\sim a(1,H_1)\;N^{1/2}\gamma_N^{2H_1}(\log N)^{\mathbbm{1}_{\{H_1=3/4\}}}
% \begin{cases}
% a(1,H_1)\;N^{1/2}\gamma_N^{2H_1}&\text{ if $H_1<3/4$}\\
% a(1,3/4)\;N^{1/2}\gamma_N^{3/2}\;\log(N)&\text{ if $H_1=3/4$}
% \end{cases}
\;,
\end{equation}
%where $\mathbbm{1}_{\{H_1=3/4\}}$ is one for $H_1=3/4$ and zero otherwise.
% \item If $q=1$ and $H_1\in (3/4,1)$, then
% \begin{equation}
%   \label{e:glim1b}
% \frac{V_N^{(1)}}{\sigma^{(1)}_N}\overset{(d)}{\longrightarrow}R_1^{2H_1-1}
% \quad\text{with}\quad
% \sigma_N^{(1)}\sim a(1,H_1)\;N^{2H_1-1}\gamma_N^{2H_1}\;.
% %N^{1-2H_1}\gamma_N^{-2H_1}V_N^{(1)}\overset{(d)}{\longrightarrow}a_2(H_1)R_1^{2H_1-1}\;,
% \end{equation}
% where $R_1^{2H_1-1}$ denotes a standard Rosenblatt variable with index $2H_1-1$.
\item \label{1b} If  $q=1$ and $H_1\in (3/4,1)$ or if $q\geq 2$ and $H_1\in (1/2,1)$, then
\begin{equation}
  \label{e:ros2a}
\frac{V_N^{(1)}}{\sigma^{(1)}_N}\overset{(d)}{\longrightarrow}R_1^{1-2(1-H_1)/q}
% \quad\text{with}\quad
% \sigma_N^{(1)}\sim a(q,H_1)\;N^{1-[2(1-H_1)/q]}\gamma_N^{2H_1}\;.
\end{equation}
\end{enumerate}
\item \label{2} If $q\geq 1$ (that is, $q+1 \geq 2$) and $H_2\in (1/2,1)$, then
\begin{equation}
  \label{e:ros2b}
\frac{V_N^{(2)}}{\sigma^{(2)}_N}\overset{(d)}{\longrightarrow}R_1^{1-2(1-H_2)/(q+1)}
\quad\text{with}\quad
\sigma_N^{(2)}\sim a(q+1,H_2)\;N^{[1-2(1-H_2)/(q+1)]}\gamma_N^{2H_2}\;.
%N^{[(2-2H_2)/(q+1)]-1}\gamma_N^{-2H_2}V_N^{(2)}\overset{(d)}{\longrightarrow}b_2(q,H_2)R_1^{\left[2(H_2-1)/(q+1)+1\right]}\;.
\end{equation}
\end{enumerate}
Here $a(q,H_1)$ and $a(q+1,H_2)$ are positive
constants.
\end{theorem}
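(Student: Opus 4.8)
The plan is to note that, apart from the explicit scale $\gamma_N$, this statement is the result of~\cite{chronopoulou:tudor:viens:2011}, and then to (i) remove $\gamma_N$ by self-similarity and (ii) recall the main steps of that analysis for unit interspacing. For (i): since $Z^{H_1,q}$ is self-similar of index $H_1$, the finite family $(Z^{H_1,q}_{\gamma_N i})_{0\le i\le N}$ has the same law as $\gamma_N^{H_1}(Z^{H_1,q}_{i})_{0\le i\le N}$, so $V_N^{(1)}$ has the same law as $\gamma_N^{2H_1}\bar V_N^{(1)}$ and, exactly, $\sigma_N^{(1)}=\gamma_N^{2H_1}\bar\sigma_N^{(1)}$, the barred quantities being those obtained with $\gamma_N\equiv1$. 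Hence it suffices to prove~\eqref{e:sigma1}, \eqref{e:glim1a} and~\eqref{e:ros2a} for $\gamma_N=1$; the same reduction with $(q,H_1)$ replaced by $(q+1,H_2)$ yields the statements on $V_N^{(2)}$, and since $q+1\ge2$ only case~\ref{1b} can occur there, producing~\eqref{e:ros2b}.

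\emph{Chaos expansion.} I would then write the increment, via~\eqref{e:zq}, as $\Delta_i:=Z^{H_1,q}_{i+1}-Z^{H_1,q}_i=c(H_1,q)I_q(f_i)$ with $f_i:=L^{H_1,q}_{i+1}-L^{H_1,q}_i$ (which, by stationarity of the increments, is the $i$-shift of $f_0$), and apply the multiplication formula for multiple Wiener--It\^o integrals to obtain
\[
\bar V_N^{(1)}=\sum_{r=0}^{q-1}\kappa_{q,r}\,T_N^{(r)}\,,\qquad T_N^{(r)}:=I_{2q-2r}\Bigl(\sum_{i=0}^{N-1}f_i\widetilde{\otimes}_r f_i\Bigr)\,,
\]
where $\kappa_{q,r}>0$, each $T_N^{(r)}$ lying in the Wiener chaos of order $2q-2r$, with only $r=0$ present when $q=1$.

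\emph{Variance asymptotics and dominant chaos.} Next I would evaluate $\mathbb{E}[(T_N^{(r)})^2]$, which by the isometry~\eqref{e:isom} is a constant times $\sum_{i,j=0}^{N-1}\langle f_i\widetilde{\otimes}_r f_i,\,f_j\widetilde{\otimes}_r f_j\rangle=\sum_{i,j}\|f_i\widetilde{\otimes}_{q-r}f_j\|_2^2$; expressing the contractions via the kernel~\eqref{e:kern} and using stationarity, this is of order $N\sum_{|k|\le N}\bigl(1\wedge|k|^{-4(q-r)(1-H_1)/q}\bigr)$, which is largest for the smallest value $q-r=1$, i.e.\ for the lowest chaos of order $2$. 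Hence $\bar\sigma_N^{(1)}$ has the order of $\sigma(T_N^{(q-1)})$, namely $N^{h_1}(\log N)^{\delta/2}$ with $h_1,\delta$ as in~\eqref{eq:h1def}--\eqref{eq:deltadef}: for $q\ge2$ one has $4(1-H_1)/q<1$ since $H_1>1/2$, so the sum diverges like $N^{2-4(1-H_1)/q}=N^{2h_1}$ and $\delta=0$; for $q=1$ it is the Breuer--Major dichotomy for $H_2$ applied to the fractional Gaussian noise $(\Delta_i)_i$, with threshold $\sum_k\rho(k)^2<\infty\Leftrightarrow H_1<3/4$ (where $\rho(k)=c(H_1,1)^2\langle f_0,f_k\rangle\sim H_1(2H_1-1)|k|^{2H_1-2}$) and a logarithmic correction exactly at $H_1=3/4$. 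Reading off the constant from the limit of the normalized double sum gives~\eqref{e:sigma1}.

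\emph{Limit law, and main obstacle.} Finally, when $h_1=1/2$ (necessarily $q=1$, $H_1\le3/4$) the whole $\bar V_N^{(1)}=T_N^{(0)}$ is a single second-chaos integral, and I would appeal to the fourth moment theorem, checking $\mathbb{E}[(T_N^{(0)})^4]/(\mathbb{E}[(T_N^{(0)})^2])^2\to3$, to get~\eqref{e:glim1a}. When $h_1>1/2$, the dominant term $T_N^{(q-1)}$ is again in the second chaos; I would show that the renormalized kernels $\bar g_N:=(\sum_i f_i\widetilde{\otimes}_{q-1}f_i)/\|\sum_i f_i\widetilde{\otimes}_{q-1}f_i\|_2$ converge in $L^2(\mathbb{R}^2)$ to a multiple of the Rosenblatt kernel $L_1^{1-2(1-H_1)/q,2}$, so that $T_N^{(q-1)}/\sigma(T_N^{(q-1)})\overset{(d)}{\longrightarrow}R_1^{1-2(1-H_1)/q}$, while the higher chaoses are $o_P(1)$ after division by $\bar\sigma_N^{(1)}$; this gives~\eqref{e:ros2a} and, via the reduction above, \eqref{e:ros2b}. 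The delicate points are the exact evaluation of the contraction norms in the variance step (pinning down $a(q,H_1)$ and the logarithmic boundary behavior at $q=1$, $H_1=3/4$) and the $L^2$-convergence of the rescaled second-chaos kernel to the Rosenblatt kernel, a Dobrushin--Major--Taqqu type computation; all details appear in~\cite{chronopoulou:tudor:viens:2011}.
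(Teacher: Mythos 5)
Your proposal is correct and follows essentially the same route as the paper: the paper's proof also disposes of the interspacing $\gamma_N$ by self-similarity and otherwise simply cites \cite{breuer:major:1983}, \cite{taqqu:1975} and \cite{chronopoulou:tudor:viens:2011} (with \cite{tudor:2013} for the constants), which is exactly the reduction you perform before sketching the internals of those cited results. The additional material you supply (chaos expansion, dominance of the second chaos, fourth moment theorem, kernel convergence to the Rosenblatt kernel) is a faithful outline of the proofs in those references rather than a different argument.
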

\begin{proof}
Point~(\ref{1a}) goes back to~\cite{breuer:major:1983} and
Point~(\ref{1b}) with $q=1$ and $H_1\in (3/4,1)$ goes back to
\cite{taqqu:1975}.
Point~(\ref{1b}) with $q\ge 2$ and $H_1\in (1/2,1)$
  can  be deduced
from~\cite{chronopoulou:tudor:viens:2011} (see Theorem~1.1 and its proof).

For the expression of the constant $a(q,H_1)$ in (\ref{e:sigma1}) with $q=1$, that is for $a(1,H_1)$, see
Propositions 5.1, 5.2 and 5.3  in ~\cite{tudor:2013} with $ H\in (1/2,3/4)$,  $H\in (3/4,1)$ and $H=3/4$ respectively.
For the expression of $a(q,H_1)$ with $q\ge 2$ and $H_1\in (1/2,1)$, see Proposition 3.1 in
\cite{chronopoulou:tudor:viens:2011}.
The expression of the constant $a(q+1,H_2)$ follows from that of $a(q,H_1)$.

The exponent of $\gamma_N$ in~(\ref{e:sigma1}) and~(\ref{e:ros2b}) results from
the fact that $Z^{H_1,q}$ and $Z^{H_2,q+1}$ are self-similar with index $H_1$
and $H_2$, respectively.
\end{proof}
In view of the decomposition ~(\ref{e:splitVNq}) and
of Theorem \ref{thm:VN12}, we need to investigate the asymptotic behavior of
the cross--term $V_N^{(3)}$ in order to get the asymptotic behavior of $V_N$.
\begin{theorem}\label{thm:VN3}
We have the following convergence and asymptotic equivalence as $N\to\infty$.
\[
\frac{V_N^{(3)}}{\sigma^{(3)}_N}\overset{(d)}{\longrightarrow}\mathcal{N}(0,1)
\quad\text{with}\quad
\sigma_N^{(3)}\sim b(q,H_1,H_2)\;N^{1-(1-H_{2})/(q+1)}\gamma _{N}^{H_{1}+H_{2}}\;,
%N^{\frac{1-H_{2}}{q+1}-1}\gamma _{N}^{-H_{1}-H_{2}}V_{N}^{(3)} \overset{(d)}{\longrightarrow} b_{3}(H_1,H_2)\mathcal{N}(0,1)\;.
\]
where $b(q,H_1,H_2)$ is a positive constant.
\end{theorem}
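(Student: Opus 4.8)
The plan is to expand $V_N^{(3)}$, defined in~(\ref{e:VN3}), into Wiener chaoses by the product formula, to isolate its component in the \emph{first} chaos, and to show by variance estimates that this component — which is exactly Gaussian — dominates all the others; the convergence then follows from Slutsky's theorem, with no appeal to a fourth--moment criterion.

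\textbf{Chaos decomposition.} For $0\le i\le N-1$ write, using~(\ref{e:zq})--(\ref{e:kern}), the increments as $Z^{H_1,q}_{t_{i+1}}-Z^{H_1,q}_{t_i}=I_q(f_i)$ and $Z^{H_2,q+1}_{t_{i+1}}-Z^{H_2,q+1}_{t_i}=I_{q+1}(g_i)$, where $f_i=c(H_1,q)(L^{H_1,q}_{t_{i+1}}-L^{H_1,q}_{t_i})$ and $g_i=c(H_2,q+1)(L^{H_2,q+1}_{t_{i+1}}-L^{H_2,q+1}_{t_i})$ are symmetric; crucially both are built from the same $B$. By the product formula for multiple integrals (Appendix~\ref{s:appendix}), $I_q(f_i)I_{q+1}(g_i)=\sum_{r=0}^{q}c_{q,r}\,I_{2q+1-2r}(f_i\widetilde{\otimes}_r g_i)$ with $c_{q,r}=r!\binom{q}{r}\binom{q+1}{r}$, so summing over $i$,
\[
V_N^{(3)}=\sum_{r=0}^{q}c_{q,r}\,J_N^{(r)},\qquad
J_N^{(r)}:=I_{2q+1-2r}\Big(\textstyle\sum_{i=0}^{N-1}f_i\widetilde{\otimes}_r g_i\Big),
\]
where $J_N^{(r)}$ lies in the chaos of order $2q+1-2r$; the orders occurring are the odd integers $1,3,\dots,2q+1$. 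In particular $J_N^{(q)}=I_1\big(\sum_i f_i\otimes_q g_i\big)$ belongs to the first chaos, hence is an exactly centered Gaussian variable for every $N$.

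\textbf{Reduction to two variance estimates.} By orthogonality of the chaoses $(\sigma_N^{(3)})^2=\sum_{r=0}^q c_{q,r}^2\,\mathbb E[(J_N^{(r)})^2]$, so it suffices to prove (A) $\mathbb E[(J_N^{(q)})^2]\sim \tilde b^2\,N^{2-2(1-H_2)/(q+1)}\gamma_N^{2(H_1+H_2)}$ for some $\tilde b>0$ and (B) $\mathbb E[(J_N^{(r)})^2]=o\big(N^{2-2(1-H_2)/(q+1)}\gamma_N^{2(H_1+H_2)}\big)$ for $0\le r\le q-1$. Indeed, (A)--(B) give $\sigma_N^{(3)}\sim b\,N^{1-(1-H_2)/(q+1)}\gamma_N^{H_1+H_2}$ with $b=c_{q,q}\tilde b$; writing $V_N^{(3)}/\sigma_N^{(3)}=c_{q,q}J_N^{(q)}/\sigma_N^{(3)}+\sum_{r<q}c_{q,r}J_N^{(r)}/\sigma_N^{(3)}$, the remainder has $L^2$--norm tending to $0$, hence is $o_P(1)$, while $J_N^{(q)}/(\mathbb E[(J_N^{(q)})^2])^{1/2}\sim\mathcal N(0,1)$ exactly (a standardized single Wiener integral) and $c_{q,q}(\mathbb E[(J_N^{(q)})^2])^{1/2}/\sigma_N^{(3)}\to1$; Slutsky's theorem then yields $V_N^{(3)}/\sigma_N^{(3)}\overset{(d)}{\longrightarrow}\mathcal N(0,1)$.

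\textbf{Computing the variances.} Set $\alpha_1=\tfrac12+\tfrac{1-H_1}q$, $\alpha_2=\tfrac12+\tfrac{1-H_2}{q+1}$, so $\alpha_1,\alpha_2\in(1/2,1)$ and $\alpha_1+\alpha_2\in(1,2)$. Inserting~(\ref{e:kern}) into $f_i\widetilde{\otimes}_r g_i$ and using repeatedly $\int_{\mathbb R}(u-x)_+^{-\mu}(v-x)_+^{-\nu}\,\rmd x=B(1-\mu,\mu+\nu-1)\,|u-v|^{1-\mu-\nu}$ (valid since $\mu,\nu<1$ and $\mu+\nu>1$), the contraction becomes a double integral over $[t_i,t_{i+1}]^2$ of $|u-v|^{r(1-\alpha_1-\alpha_2)}$ times $q-r$ factors $(u-\cdot)_+^{-\alpha_1}$ and $q+1-r$ factors $(v-\cdot)_+^{-\alpha_2}$. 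Pairing the remaining $2q+1-2r$ arguments against those of $f_j\widetilde{\otimes}_r g_j$ in the inner product and integrating them out by the same identity gives $\langle f_i\widetilde{\otimes}_r g_i,\ f_j\widetilde{\otimes}_r g_j\rangle=\gamma_N^{2(H_1+H_2)}\,\psi_r(i-j)$, the power of $\gamma_N$ being forced by self--similarity. Since the total of the pairing exponents depends only on the multiset of contraction exponents, one finds $\psi_r(k)\sim \kappa_r\,|k|^{-\beta_r}$ as $|k|\to\infty$, with $\kappa_r>0$ and $\beta_r=2(q-r)\big(\tfrac{1-H_1}{q}+\tfrac{1-H_2}{q+1}\big)+\tfrac{2(1-H_2)}{q+1}$. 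For $r=q$ one has $\beta_q=2(1-H_2)/(q+1)\in(0,1)$, so $\mathbb E[(J_N^{(q)})^2]=\gamma_N^{2(H_1+H_2)}\sum_{|k|<N}(N-|k|)\psi_q(k)\sim\kappa_q\frac{2}{(1-\beta_q)(2-\beta_q)}\,N^{2-\beta_q}\gamma_N^{2(H_1+H_2)}$, which is (A). For $r<q$ one has $\beta_r>\beta_q$, so the same Cesàro estimate — or plain summability of $\psi_r$ when $\beta_r\ge1$ — gives $\mathbb E[(J_N^{(r)})^2]=O\big(N^{\max(1,\,2-\beta_r)}(\log N)^{\mathbbm 1\{\beta_r=1\}}\gamma_N^{2(H_1+H_2)}\big)=o\big(N^{2-\beta_q}\gamma_N^{2(H_1+H_2)}\big)$ since $\beta_q<1$, which is (B). One checks that all Beta integrals converge for every admissible $(q,H_1,H_2)$ — for instance $r(1-\alpha_1-\alpha_2)>-1$ because $(1-H_1)+(1-H_2)<1$ — so no case distinction arises, consistently with the unconditional statement.

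\textbf{Main obstacle.} The conceptual content (the lowest, first chaos dominates and is Gaussian) is simple; the work lies in the contraction bookkeeping for general $r$: tracking which arguments get paired, checking that every admissible pairing contributes at the common order $|k|^{-\beta_r}$ so that $\kappa_r$ is genuinely positive, controlling the remainders in $\psi_r(k)\sim\kappa_r|k|^{-\beta_r}$ uniformly enough to pass to the Cesàro limit, and verifying convergence of all the integrals involved. These estimates are the content of the technical lemmas of Section~\ref{s:lemmas}.
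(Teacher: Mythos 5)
Your proposal is correct and follows essentially the same route as the paper: chaos expansion of $V_N^{(3)}$ via the product formula, the Beta--function identity for the contractions (Lemma~\ref{lem:beta:tilde}), a change of variables extracting $\gamma_N^{2(H_1+H_2)}$, Riemann--sum asymptotics for the off--diagonal sums (Lemma~\ref{lem:riemman:sum}), and domination by the exactly Gaussian first--chaos term $r=q$ (Propositions~\ref{pro:WienerChaos} and~\ref{pro:NT}). The only cosmetic slip is that the constant in your contraction identity in fact depends on the sign of $u-v$ (the paper's $\widetilde{\beta}_{a,b}(u,v)$), which affects only the value of $\kappa_r$, not the rates or the conclusion.
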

\begin{remark}\label{rem:gene1}
Theorem~\ref{thm:VN3} cannot be directly extended to the general case where the  process $Z$ is the sum of two Hermite processes of order $q_1,q_2$ with $q_2-q_1>1$. This is because the proof is based on the fact that $V_N^{(3)}$ admits a Gaussian leading term. This may not happen if $q_2-q_1>1$ (see the proof of Proposition~\ref{pro:NT} and Remark~\ref{rem:gene2} for more details).
In contrast, Theorems \ref{thm:indep-case} and \ref{thm:Vn-independent-case} below can be easily extended.
\end{remark}
\begin{proof}
The proof of Theorem~\ref{thm:VN3} is found in Section~\ref{s:proof:VN3}.
\end{proof}

\subsection{Quadratic covariation in the independent case}

Theorem~\ref{thm:VN3} will imply that the term $V^{(3)}_{N}$, which corresponds
to the quadratic covariation of $Z^{H_{1},q}$ and $ Z^{H_{2},q+1}$, may
dominate in the asymptotic behavior of the sequence $V_{N}$. On the other hand,
if the two Hermite processes are independent, the quadratic covariation is
always dominated by the quadratic variation of one of these processes. This is
a consequence of the following theorem.
\begin{theorem}\label{thm:indep-case}
  Assume that for every $t\geq 0$, $Z_{t}^{H_{1},q}$ and $Z_{t}^{H_{2},q+1}$
  are given by~(\ref{e:zq}) and define
$$
\tilde V^{(3)}_{N}= \sum_{i=0}^{N-1}\left(Z^{H_{1},q}_{t_{i+1}}-Z^{H_{1},q}
  _{t_{i}}\right)\left(\tilde Z^{H_{2},q+1}_{t_{i+1}}-\tilde Z^{H_{2},q+1}_{t_{i}}\right)\;,
$$
where the process $\tilde{Z}^{H_{2},q+1}$ is an independent copy of $Z^{H_{2},q+1}$.
 % = I^{ \tilde{B}} (L_{t} ^{H_{2},
 %    q+1})$ where $B, \tilde{B}$ are independent Wiener processes, $I^{B}_{q}$
 %  and $I^{\tilde{B}} _{q+1}$ denotes the multiple integrals with respect to $B$
 %  and $\tilde{B}$ respectively and the kernel $L_{t}$ is given by
 %  (\ref{e:kern}).
Then, as $N\to \infty$,
\begin{equation*}
\mathbb{E}\left[ \left( \tilde V^{(3)}_{N} \right)^{2}\right] =o\left( \sigma_N^{(1)}\times\sigma_N^{(2)}\right)\;,
\end{equation*}
where $\sigma_N^{(1)}$ and $\sigma_N^{(2)}$ are defined by~(\ref{eq:sig2def}).
\end{theorem}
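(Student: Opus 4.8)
The plan is to use the independence of the two driving Wiener processes to rewrite $\mathbb{E}[(\tilde V_N^{(3)})^2]$ as a double sum of products of increment covariances, bound its growth by an elementary power--sum estimate, and then compare it with $\sigma_N^{(1)}\sigma_N^{(2)}$ via Theorem~\ref{thm:VN12}. First I would set $A_i=Z^{H_1,q}_{t_{i+1}}-Z^{H_1,q}_{t_i}$ and $B_i=\tilde Z^{H_2,q+1}_{t_{i+1}}-\tilde Z^{H_2,q+1}_{t_i}$, so that $\tilde V_N^{(3)}=\sum_{i=0}^{N-1}A_iB_i$. Since $Z^{H_1,q}$ and $\tilde Z^{H_2,q+1}$ are multiple Wiener--It\^{o} integrals with respect to \emph{independent} Wiener processes, the families $(A_i)_i$ and $(B_i)_i$ are independent and each $A_i,B_i$ has mean zero; hence $\mathbb{E}[A_iB_iA_jB_j]=\mathbb{E}[A_iA_j]\,\mathbb{E}[B_iB_j]$, and
\[
\mathbb{E}\left[\left(\tilde V_N^{(3)}\right)^2\right]=\sum_{i,j=0}^{N-1}\mathbb{E}[A_iA_j]\,\mathbb{E}[B_iB_j]\;.
\]

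By self--similarity, stationarity of the increments and the common covariance formula recalled in Section~\ref{prel}, we have $\mathbb{E}[A_iA_j]=\gamma_N^{2H_1}\rho_1(i-j)$ and $\mathbb{E}[B_iB_j]=\gamma_N^{2H_2}\rho_2(i-j)$, where $\rho_m(k)=\frac12\left(|k+1|^{2H_m}+|k-1|^{2H_m}-2|k|^{2H_m}\right)$ for $m=1,2$. Collecting the diagonals $i-j=k$ gives
\[
\mathbb{E}\left[\left(\tilde V_N^{(3)}\right)^2\right]=\gamma_N^{2H_1+2H_2}\sum_{|k|<N}(N-|k|)\,\rho_1(k)\rho_2(k)=:\gamma_N^{2H_1+2H_2}\,S_N\;.
\]
Since $\rho_m(k)\sim H_m(2H_m-1)|k|^{2H_m-2}$ as $|k|\to\infty$, the product behaves like $c\,|k|^{2(H_1+H_2)-4}$ with $c>0$ (in particular it is eventually positive), and as $H_1+H_2\in(1,2)$ the exponent $2(H_1+H_2)-4$ lies in $(-2,0)$. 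Comparing $\sum_{|k|<N}(N-|k|)\,|k|^{2(H_1+H_2)-4}$ with $\int_1^N(N-x)x^{2(H_1+H_2)-4}\,\rmd x$ then gives $S_N=O(N)$ if $H_1+H_2<3/2$, $S_N=O(N\log N)$ if $H_1+H_2=3/2$, and $S_N=O(N^{2(H_1+H_2)-2})$ if $H_1+H_2>3/2$.

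It remains to compare with the right--hand side. By Theorem~\ref{thm:VN12}, $\sigma_N^{(1)}\sigma_N^{(2)}$ is, up to a positive constant, equivalent to $\gamma_N^{2H_1+2H_2}\,N^{h_1+h_2'}(\log N)^{\delta/2}$ with $h_2'=1-2(1-H_2)/(q+1)$; the powers of $\gamma_N$ cancel, so it suffices to show that the exponent of $N$ in $S_N$ is strictly smaller than $h_1+h_2'$ (an extra $\log N$ in $S_N$ is harmless). Since $2(1-H_2)/(q+1)<1/2$ for $q\geq1$ we get $h_2'>1/2$, while $h_1\geq1/2$ by definition, so $h_1+h_2'>1$; this disposes of the first two cases immediately. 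In the remaining case $H_1+H_2>3/2$, the required inequality $2(H_1+H_2)-2<h_1+h_2'$ reduces, when $h_1=1-2(1-H_1)/q$, to
\[
2(1-H_1)\left(\frac1q-1\right)+2(1-H_2)\left(\frac1{q+1}-1\right)<0\;,
\]
which holds because $q\geq1$, $1-H_1,1-H_2>0$, and the second factor is strictly negative; and when $h_1=1/2$ (so $q=1$ and $H_1<3/4$) it reduces to $2H_1+H_2<5/2$, which holds since $H_1<3/4$ and $H_2<1$. This yields $\mathbb{E}[(\tilde V_N^{(3)})^2]=o(\sigma_N^{(1)}\sigma_N^{(2)})$.

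The only genuinely delicate point I anticipate is this last comparison, where one must combine the three growth regimes of $S_N$ with the two branches of the maximum defining $h_1$ and verify the resulting elementary inequalities case by case; the remaining steps follow directly from independence, self--similarity, and the power--sum bound.
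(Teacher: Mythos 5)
Your proof is correct. You and the paper start identically: independence factorizes $\mathbb{E}[(\tilde V_N^{(3)})^2]$ into $\sum_{i,j}\mathbb{E}[A_iA_j]\,\mathbb{E}[B_iB_j]$, and both arguments exploit the fact that the increment covariances are those of fractional Brownian motion regardless of the order $q$. After that the routes diverge. The paper applies the Cauchy--Schwarz inequality to this double sum and then the Gaussian identity $(\mathrm{Cov}(X,Y))^2=\tfrac12\,\mathrm{Cov}(X^2,Y^2)$ to recognize $\sum_{i,j}(\gamma_{i,j}(Z^{H_m,1}))^2$ as $\tfrac12\mathbb{E}[(V_N(Z^{H_m,1}))^2]$; the conclusion then follows from the observation, read off from Theorem~\ref{thm:VN12}, that the growth rate of $\mathbb{E}[(V_N(Z^{H,q}))^2]$ is strictly increasing in $q$ for fixed $H$, so the fBm rates are strictly dominated by the Hermite rates. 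This avoids any explicit summation. You instead compute the double sum head-on via the correlation function $\rho_m(k)\sim H_m(2H_m-1)|k|^{2H_m-2}$ and a power-sum estimate, which forces the three regimes $H_1+H_2\lessgtr 3/2$ and a case split on the two branches of $h_1$, but is entirely elementary and self-contained (it does not need the constants or the monotonicity-in-$q$ remark from Theorem~\ref{thm:VN12}, only the exponents). Your case analysis checks out: $h_1+h_2'>1$ disposes of $H_1+H_2\leq 3/2$, and in the remaining regime the inequality $2(H_1+H_2)-2<h_1+h_2'$ reduces to the strict negativity of $2(1-H_1)(1/q-1)+2(1-H_2)(1/(q+1)-1)$, respectively to $2H_1+H_2<5/2$, both of which hold. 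The only cosmetic point is that the $k=0$ term should be set aside before comparing with the integral (it contributes $O(N)$, which is harmless), and the boundary case $H_1=3/4$, $q=1$ is covered since the extra $(\log N)^{\delta/2}$ sits on the favorable side.
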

\begin{proof} We have, from the independence of the two Hermite processes,
\begin{eqnarray*}
\mathbb{E}\left[ \left(\tilde V^{(3)}_{N}\right) ^{2}\right] &=& \sum_{i,j=0}
^{N-1} \gamma_{i,j} (Z ^{H_{1},q})\gamma _{i,j}(\tilde Z^{H_{2},q+1} ) \;,
\end{eqnarray*}
where
$$
\gamma_{i,j}(X):=
\mathbb{E}\left[(X_{t_{i+1}}-X_{t_{i}})(X_{t_{j+1}}-X_{t_{j}})\right]\;.
$$
Since the covariance structure of the Hermite process $Z^{H,q}$ is the same for all $q\geq 1$, we obtain
\begin{eqnarray*}
\mathbb{E}\left[\left(\tilde V^{(3)}_{N}\right)^{2}\right] &=& \sum_{i,j=0}^{N-1} \gamma_{i,j}(Z^{H_{1},1})\gamma _{i,j}(Z^{H_{2},1} ) \\
&\leq &\left( \sum_{i,j=0}^{N-1} \left(\gamma_{i,j} (Z^{H_{1},1})\right)^{2}\right)^{\frac{1}{2}} \left(\sum_{i,j=0}^{N-1}\left(\gamma _{i,j}(Z^{H_{2},1})\right)^{2}\right)^{\frac{1}{2}}\;,
\end{eqnarray*}
where the last line follows from the Cauchy--Schwarz inequality.
Recall that for two jointly centered Gaussian random variables  $X$ and $Y$, we have
 $$
 \left( \mathrm{Cov} (X,Y) \right) ^{2}= \frac{1}{2} \mathrm{Cov} (X^{2}, Y^{2} ).
 $$
  Hence
\begin{eqnarray*}
\sum_{i,j=0}^{N-1} \left( \gamma_{i,j} (Z^{H_{1},1})\right)^{2}  &=&
\sum_{i,j=0}^{N-1}  \left(\mathrm{Cov}\left( Z^{H_{1},1}_{t_{i+1}}- Z^{H_{1},1}_{t_{i}}, Z^{H_{1},1}_{t_{j+1}}- Z^{H_{1},1}_{t_{j}}\right)\right)^{2}\\&=&
\frac{1}{2}\sum_{i,j=0}^{N-1} \mathrm{Cov}\left( \left( Z^{H_{1},1}_{t_{i+1}}- Z^{H_{1},1}_{t_{i}}\right)^{2},  \left( Z^{H_{1},1}_{t_{j+1}}- Z^{H_{1},1}_{t_{j}}\right)^{2}\right)\\
& =&  \frac{1}{2}\mathrm{Var} \left[ \sum_{i=0}^{N-1}\left(\left( Z^{H_{1},1}_{t_{i+1}}- Z^{H_{1},1}_{t_{i}}\right)^{2}-\mathbb{E}\left[\left( Z^{H_{1},1}_{t_{i+1}}- Z^{H_{1},1}_{t_{i}}\right)^{2}\right]\right)\right]\\
&=&  \frac{1}{2}\mathbb{E} \left[ \left(V_{N} \left( Z ^{1, H_{1}}\right) \right) ^{2} \right]
\end{eqnarray*}
by using the notation (\ref{1}). Consequently,
\begin{eqnarray*}
\mathbb{E}\left[ \left(\tilde V^{(3)}_{N}\right) ^{2}\right] &\leq& \frac{1}{2}
\left\{ \mathbb{E} \left[ \left(V_{N} \left( Z^{H_{1},1}\right)\right) ^{2}
  \right]\mathbb{E}\left[\left(V_{N}\left(Z^{H_{2},1}\right)\right)^{2}\right]\right\}^{\frac{1}{2}}\;.
\end{eqnarray*}
By Theorem \ref{thm:VN12}, we know that, as $N\to\infty$, the rate of
convergence of the variance of the quadratic variations of the Hermite process $Z^{H,q}$  (strictly)
increases with respect to $q$ (when $H$ is fixed).  Therefore, since $q\geq1$
and $q+1>1$,  we get, as $N\to \infty$,
\begin{eqnarray*}
\mathbb{E}\left[ \left(\tilde V^{(3)}_{N}\right) ^{2}\right] =o\left(  \left\{ \mathbb{E} \left[ \left(V_{N} \left( Z ^{H_{1},q}\right)\right) ^{2}
  \right]
\mathbb{E}\left[ \left( V_{N} \left( Z ^{H_{2},q+1}\right) \right) ^{2}
\right]\right\}^{\frac{1}{2}}\right)\;,
\end{eqnarray*}
which concludes the proof.
\end{proof}

\section{Asymptotic behavior of the quadratic variation of the sum}
\label{sec:asympt-behav-quadr}
\subsection{Dependent case}
It is now clear that the asymptotic behavior of $V_N$ will depend on the
relative behavior of the three sumands $V_{N}^{(1)}$, $V_{N}^{(2)}$ and
$V_{N}^{(3)}$. More precisely we have the following result.

\begin{theorem}\label{th:main}
Let us define
\begin{equation}\label{e:nu}
\nu_1:=\frac{1-H_2}{1+q}-1+\max\left(\frac12,1-\frac{2(1-H_1)}{q}\right)
\;<\;\frac{1-H_2}{1+q}=:\nu_2\;.
\end{equation}
Denoting $\delta$ as in~(\ref{eq:deltadef}), we
have the following asymptotic equivalence as $N\to\infty$~:
 \begin{enumerate}
  \item\label{item:V1} If $\gamma_N^{H_2-H_1}\ll N^{\nu_1}\,(\log N)^{\delta/2}$ then
$$
V_N=V_N^{(1)} \; (1+o_P(1))\;.
$$
\item\label{item:V3} If $N^{\nu_1}\,(\log N)^{\delta/2}\ll \gamma_N^{H_2-H_1}\ll N^{\nu_2}$, then
$$
V_N=2V_N^{(3)} \; (1+o_P(1))\;.
$$
\item\label{item:V2} If $\gamma_N^{H_2-H_1}\gg N^{\nu_2}$ then
$$
V_N=V_N^{(2)} \; (1+o_P(1))\;.
$$
\end{enumerate}
%\end{enumerate}
\end{theorem}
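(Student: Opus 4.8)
The plan is to compare the three standard deviations $\sigma_N^{(1)}$, $\sigma_N^{(2)}$ and $\sigma_N^{(3)}$ defined in~(\ref{eq:sig2def}): in each of the three regimes exactly one of them strictly dominates the other two, and the corresponding summand in the decomposition~(\ref{e:splitVNq}) then governs the asymptotic behaviour of $V_N$. The elementary fact I would isolate first is this. Suppose $i\in\{1,2,3\}$ is an index such that $\sigma_N^{(j)}=o(\sigma_N^{(i)})$ as $N\to\infty$ for both indices $j\neq i$. Since $\mathbb{E}[(V_N^{(j)})^2]=(\sigma_N^{(j)})^2$, the Markov inequality applied to the second moment gives $V_N^{(j)}/\sigma_N^{(i)}\to0$ in probability, so by~(\ref{e:splitVNq})
\[
\frac{V_N}{\sigma_N^{(i)}}=\frac{V_N^{(i)}}{\sigma_N^{(i)}}+o_P(1)\;.
\]

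Next I would use that, by Theorems~\ref{thm:VN12} and~\ref{thm:VN3}, the ratio $V_N^{(i)}/\sigma_N^{(i)}$ converges in distribution to a limit whose law is either $\mathcal{N}(0,1)$ or a Rosenblatt law; in either case the limit is absolutely continuous and hence has no atom at $0$. A routine portmanteau argument then shows that whenever $A_N\to0$ in probability and $B_N$ converges in distribution to some $B$ with $P(B=0)=0$, one has $A_N/B_N\to0$ in probability (indeed $P(|A_N/B_N|>\varepsilon)\leq P(|A_N|>\varepsilon M)+P(|B_N|\leq M)$, and $\limsup_N P(|B_N|\leq M)\leq P(|B|\leq M)\to0$ as $M\downarrow0$). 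Applying this with $A_N=(V_N-V_N^{(i)})/\sigma_N^{(i)}$ and $B_N=V_N^{(i)}/\sigma_N^{(i)}$, and keeping track of the factor $2$ in front of $V_N^{(3)}$, I get $V_N=V_N^{(i)}(1+o_P(1))$, respectively $V_N=2V_N^{(3)}(1+o_P(1))$ when $i=3$.

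It then remains to decide, in each regime, which standard deviation dominates. Plugging in the explicit equivalents $\sigma_N^{(1)}\sim a(q,H_1)\,\gamma_N^{2H_1}N^{h_1}(\log N)^{\delta/2}$, $\sigma_N^{(2)}\sim a(q+1,H_2)\,N^{1-2(1-H_2)/(q+1)}\gamma_N^{2H_2}$ and $\sigma_N^{(3)}\sim b(q,H_1,H_2)\,N^{1-(1-H_2)/(q+1)}\gamma_N^{H_1+H_2}$ from those theorems, a short computation gives
\[
\frac{\sigma_N^{(3)}}{\sigma_N^{(1)}}\asymp\frac{\gamma_N^{H_2-H_1}}{N^{\nu_1}(\log N)^{\delta/2}}\;,\qquad
\frac{\sigma_N^{(3)}}{\sigma_N^{(2)}}\asymp\frac{N^{\nu_2}}{\gamma_N^{H_2-H_1}}\;,
\]
where I use that, with $h_1$ as in~(\ref{eq:h1def}), the exponent $1-(1-H_2)/(q+1)-h_1$ is exactly $-\nu_1$ by the definition~(\ref{e:nu}) of $\nu_1$, and that $1-(1-H_2)/(q+1)-\bigl(1-2(1-H_2)/(q+1)\bigr)=(1-H_2)/(q+1)=\nu_2$.

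Finally I would read off the three cases, using $\nu_1<\nu_2$ (so that $N^{\nu_1}(\log N)^{\delta/2}=o(N^{\nu_2})$) to obtain the missing comparison of $\sigma_N^{(1)}$ with $\sigma_N^{(2)}$ for free in each regime. If $\gamma_N^{H_2-H_1}\ll N^{\nu_1}(\log N)^{\delta/2}$, then $\sigma_N^{(3)}=o(\sigma_N^{(1)})$ and also $\gamma_N^{H_2-H_1}=o(N^{\nu_2})$, hence $\sigma_N^{(2)}=o(\sigma_N^{(3)})=o(\sigma_N^{(1)})$ and $V_N=V_N^{(1)}(1+o_P(1))$; if $N^{\nu_1}(\log N)^{\delta/2}\ll\gamma_N^{H_2-H_1}\ll N^{\nu_2}$, then $\sigma_N^{(1)}$ and $\sigma_N^{(2)}$ are both $o(\sigma_N^{(3)})$ and $V_N=2V_N^{(3)}(1+o_P(1))$; and if $\gamma_N^{H_2-H_1}\gg N^{\nu_2}$, then $\sigma_N^{(3)}=o(\sigma_N^{(2)})$ while $\gamma_N^{H_2-H_1}\gg N^{\nu_1}(\log N)^{\delta/2}$, hence $\sigma_N^{(1)}=o(\sigma_N^{(3)})=o(\sigma_N^{(2)})$ and $V_N=V_N^{(2)}(1+o_P(1))$. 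I do not anticipate a serious obstacle: all the substantive content sits in Theorems~\ref{thm:VN12} and~\ref{thm:VN3}, and the only mild subtlety is the bookkeeping of the factor $(\log N)^{\delta/2}$, which is non-trivial only in the boundary case $q=1$, $H_1=3/4$; since all hypotheses are strict ($\ll$ or $\gg$), strict domination always holds and the non-degeneracy of the limit laws makes the conclusion robust.
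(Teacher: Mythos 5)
Your proposal is correct and follows essentially the same route as the paper: both compare the three standard deviations via Theorems~\ref{thm:VN12} and~\ref{thm:VN3}, reduce the comparison to the ratios $N^{\nu_1}(\log N)^{\delta/2}/\gamma_N^{H_2-H_1}$ and $\gamma_N^{H_2-H_1}/N^{\nu_2}$, and use $\nu_1<\nu_2$ to settle the three regimes. The only difference is that you spell out the step from $L^2$-domination to the multiplicative $(1+o_P(1))$ form via the absence of an atom at $0$ for the limit laws, a point the paper leaves implicit in the remark that the $L^2$ rates ``always correspond to the rate of convergence in distribution.''
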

\begin{proof}
Let us compare the terms $V_{N}^{(1)}, V_{N}^{(2)} $ and $V_{N}^{(3)}$ in
each case considered in Theorem~\ref{th:main}.
By Theorem~\ref{thm:VN12} and Theorem~\ref{thm:VN3}, we have, for
some positive constants $c_1,\,c_2$ and $c_3$,
\begin{align*}
\mathbb{E}\left[\left|V_{N}^{(1)}\right|^2\right]^{1/2} &\sim c_1\,\gamma_N^{2H_1}\,\,N^{h_1}\,(\log N)^{\delta/2} \;,\\
\mathbb{E}\left[\left|V_{N}^{(2)}\right|^2\right]^{1/2} &\sim
c_2\,N^{1-2(1-H_2)/(q+1)} \gamma_N^{2H_2}\;,\\
\mathbb{E}\left[\left|V_{N}^{(3)}\right|^2\right]^{1/2} &\sim c_3\,N^{1-(1-H_2)/(q+1)} \gamma_N^{H_1+H_2}\;,
\end{align*}
and these rates always correspond to the rate of convergence in distribution.

The different cases are obtained by using the definitions of
$\nu_1<\nu_2$ in~(\ref{e:nu}) and by computing the following ratios of the above
rates for $V_{N}^{(1)}$ versus  $V_{N}^{(3)}$~:
\begin{equation}
  \label{eq:1vs3}
\frac{\gamma_N^{2H_1}\,N^{h_1}\,(\log N)^{\delta/2}}{N^{1-(1-H_2)/(q+1)}\gamma_N^{H_1+H_2}}
=\frac{N^{\nu_1}\,(\log N)^{\delta/2}}{\gamma_N^{H_2-H_1}}\;,
\end{equation}
and $V_{N}^{(2)}$ versus  $V_{N}^{(3)}$~:
\begin{equation}
  \label{eq:2vs3}
\frac{N^{1-2(1-H_2)/(q+1)} \gamma_N^{2H_2}}{N^{1-(1-H_2)/(q+1)}
  \gamma_N^{H_1+H_2}}
=\frac{\gamma_N^{H_2-H_1}}{N^{\nu_2}}\;.
\end{equation}
Observing that $\nu_1<\nu_2$ and thus $N^{\nu_1}(\log N)^{\delta/2}\ll N^{\nu_2}$, we get in the three cases~:
\begin{enumerate}
\item If
$$
\gamma_N^{H_2-H_1}\ll N^{\nu_1}\,(\log N)^{\delta/2},
$$
 then $V_N^{(1)}$ dominates $V_N^{(3)}$
  by~(\ref{eq:1vs3}). But since it implies $\gamma_N^{H_2-H_1}\ll N^{\nu_2}$, we have
  by~(\ref{eq:2vs3}) that   $V_N^{(3)}$ dominates $V_N^{(2)}$. Hence
  $V_N^{(1)}$ dominates in this case.
\item   If
$$
N^{\nu_1}\,(\log N)^{\delta/2}\ll \gamma_N^{H_2-H_1}\ll N^{\nu_2},
$$
 then  $V_N^{(3)}$ dominates both
  $V_N^{(1)}$ and $V_N^{(2)}$ by~(\ref{eq:1vs3}) and~(\ref{eq:2vs3}), respectively.
\item If
$$
\gamma_N^{H_2-H_1}\gg N^{\nu_2},
$$
 then  $V_N^{(2)}$ dominates $V_N^{(3)}$
  by~(\ref{eq:2vs3}). But since it implies $\gamma_N^{H_2-H_1}\gg N^{\nu_1}\,(\log N)^{\delta/2}$, we have
  by~(\ref{eq:1vs3}) that   $V_N^{(3)}$ dominates $V_N^{(1)}$. Hence
  $V_N^{(2)}$ dominates in this case.
\end{enumerate}
This concludes the proof  of Theorem~\ref{th:main}.
\end{proof}
\begin{remark}\label{rem:nu1sign}
  Note that $\nu_2>0$ but $\nu_1$ can be positive, zero, or negative. In fact,
$$
\nu_1=
\begin{cases}
  \frac{1-H_2}{1+q}-\frac12&\text{ if $q=1$ and $H_1<3/4$}\\
  \frac{1-H_2}{1+q}-\frac{2(1-H_1)}{q}&\text{ otherwise.}
\end{cases}
$$
It follows that
$$
\nu_1\leq0 \Longleftrightarrow H_2\geq1-\frac{2(q+1)(1-H_1)}{q}\;,
$$
with equality on the left-hand side if and only if there is equality on the
right-hand side. The equality case corresponds to having $(H_1,H_2)$ on the segment
with end points $(1-q/(4(q+1)),1/2)$ and $(1,1)$, see
Figure~\ref{fig:nu1sign}. The $H_1$ coordinate of the
bottom end point is   $1-q/(4(q+1))$. For $q=1$ it equals
$1-1/8=0.875$ and, as $q\to\infty$, it decreases towards $3/4$.
\end{remark}

\begin{figure}[h]
  \centering
  \includegraphics[width=0.8\textwidth]{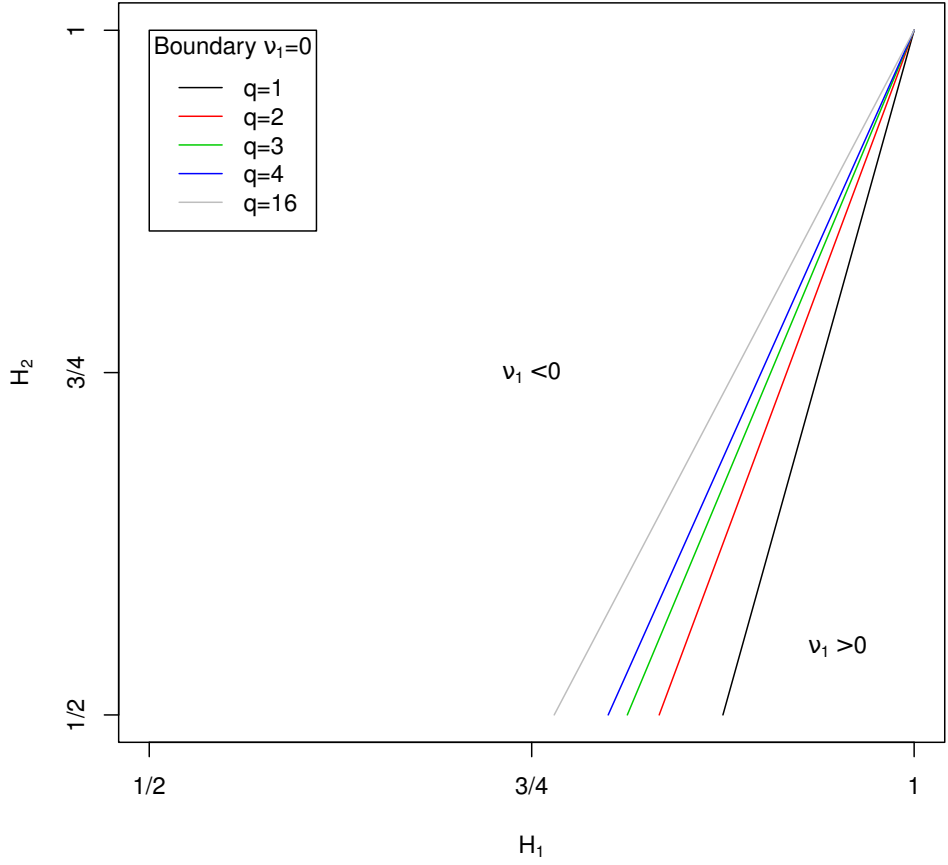}
  \caption{Domains of points $(H_1,H_2)$ where the signs of $\nu_1$ is negative
    or positive. The lines show the boundary between these sets. The right-hand
    line corresponds to $q=1$ and the left-hand line to $q=16$. The processes
    here are dependent.}
  \label{fig:nu1sign}
\end{figure}
Let us illustrate Theorem~\ref{th:main} with some examples.

\begin{example}\label{ex2}
  In the particular case where $H_{1}=H_{2}$, by Remark~\ref{rem:nu1sign}, we
  always have $\nu_1<0$. It follows that we are in Case~(\ref{item:V3}) of Theorem~\ref{th:main}
  whatever the values of $q=1,2,\dots$ and the interspacing scale $\gamma_N$.
  Thus, the dominant part of $V_{N}$ is the summand $2V_{N} ^{(3)}$. By
  Theorem~\ref{thm:VN3}, we conclude that the limit of the normalized quadratic
  variation of the sum of two Hermite processes with the same self-similarity
  index and successive orders is asymptotically Gaussian.
\end{example}

\begin{example}\label{ex:gammaeq1}
  When $\gamma_N=1$, the asymptotic behavior of the quadratic variation depends
  on the sign of $\nu_1$. If $\nu_1<0$, we are in Case~(\ref{item:V3}) of
  Theorem~\ref{th:main}, the dominant part of $V_{N}$ is the summand $2V_{N}
  ^{(3)}$ and the limit is asymptotically Gaussian by Theorem~\ref{thm:VN3}. If
  $\nu_1>0$, we are in Case~(\ref{item:V1}) of Theorem~\ref{th:main}, the
  dominant part of $V_{N}$ is the summand $V_{N}^{(1)}$ and by
  Theorem~\ref{thm:VN12}, the limit is Rosenblatt. Indeed, $\nu_1>0$ excludes
  $q=1$ and $H_1\leq 3/4$, see Figure~\ref{fig:nu1sign}.
\end{example}

% \begin{example}\label{ex3}
% Suppose that $q=1$ and $H_1\in(1/2,3/4]$. In this case $Z^{H_{1},1}$ is a fractional Brownian motion with Hurst parameter $H_{1}$ and $ Z^{H_{2},2} $ is a Rosenblatt process
% with self-similarity index $H_{2}$.  We have $\max\left(1/2,1-2(1-H_{1})
% \right) = \max(1/2,2H_{1}-1)=1/2$ and thus in~(\ref{e:nu}),
% \begin{equation*}
% \nu _{1}= -{H_2}/2<0 \quad \mbox{ and } \quad\nu _{2} = (1-H_{2})/{2}>0\;.
% \end{equation*}
% Then, for instance, if $\gamma_{N}\equiv 1$, we always are in
% Case~(\ref{item:V3}) of Theorem~\ref{th:main}. Thus $V_n^{(3)}$ dominates and
% $V_n$ converges, with a suitable normalization, to a Gaussian distribution.
% \end{example}

% \begin{example}\label{ex3bis}
%   Suppose that $q=1$ and $H_1\in[3/4,1)$. This case is similar to
%   Example~\ref{ex3} but the last expression of $\nu_1$ is replaced by $\nu_1=2H_{1}-H_2/2-3/2$.
%   Then, for instance, if $\gamma_{N}\equiv 1$, we have the following results~:
% \begin{enumerate}
% \item If $2H_{1}-H_{2}/2-3/2>0$ we are in Case (\ref{item:V1}) of
%   Theorem~\ref{th:main} and consequently, by Theorem~\ref{thm:VN12}, $V_{N}$ converges, after
%   normalization, to a Rosenblatt distribution.
% \item If $2H_{1}-H_{2}/2-3/2<0$ then we are in Case (\ref{item:V3}) of
%   Theorem~\ref{th:main} and consequently, by Theorem~\ref{thm:VN3}, $V_n$
%   converges, with a different normalization, to a Gaussian distribution.
% \end{enumerate}
% \end{example}

\begin{example}\label{ex4}
  The case $H_2=2H_1-1$ and $q=1$ is of special interest because it is related
  to an open problem in \cite{clausel:roueff:taqqu:tudor:2012}.  Let us recall
  the context.  Suppose you have unit variance Gaussian stationary data
  $Y_i,i\geq 1$ with spectral density $f(\lambda)$ which blows up like
  $|\lambda|^{-2d}$ at the origin, with $1/4<d<1/2$. Then the partial sums
  behave asymptotically like fractional Brownian motion with index $H_1$, where
\begin{equation}\label{e:link1}
2H_1=(2d-1)+2=2d+1\;.
\end{equation}
On the other hand, referring to \cite{clausel:roueff:taqqu:tudor:2011a}, the
partial sums of $Y_i^2-1,i\geq 1$ behave like a Rosenblatt process with index
$H_2$, where
\begin{equation}\label{e:link2}
2H_2=2(2d-1)+2=4d\;.
\end{equation}
It follows that, conveniently normalized, for $n$ large,
$\sum_{k=1}^{[nt]}(Y_k+Y_k^2-1)$ can be seen as a process $Z_t$ as defined
by~(\ref{e:mixed}) with $H_2=2H_1-1$.

\medskip

Applying Theorem~\ref{th:main} with $H_2=2H_1-1$ and $q=1$, we obtain the
following result.
\begin{corollary}\label{cor:specialcase}
If $H_2=2H_1-1$ and $q=1$, we have the following asymptotic equivalence as $N\to\infty$~:
\begin{enumerate}
\item If $\gamma_N\gg  N$ then
$$
V_N=V_N^{(1)} \; (1+o_P(1))\;.
$$
\item If $N^{-1}\ll \gamma_N\ll N$, then
$$
V_N=2V_N^{(3)} \; (1+o_P(1))\;.
$$
\item If $\gamma_N\ll N^{-1}$ then
$$
V_N=V_N^{(2)} \; (1+o_P(1))\;.
$$
\end{enumerate}
\end{corollary}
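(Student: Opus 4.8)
The plan is to obtain Corollary~\ref{cor:specialcase} as a direct specialization of Theorem~\ref{th:main}: the entire content is to evaluate the exponents $\nu_1$, $\nu_2$ and $H_2-H_1$ under the constraints $q=1$ and $H_2=2H_1-1$, and then to rewrite the three dichotomy conditions of Theorem~\ref{th:main} as conditions on $\gamma_N$ relative to powers of $N$. The first step I would carry out is a preliminary observation: since $H_2\in(1/2,1)$ and $H_2=2H_1-1$, we must have $H_1\in(3/4,1)$. Consequently $H_1\neq 3/4$, so $\delta=0$ by~(\ref{eq:deltadef}) and every logarithmic factor $(\log N)^{\delta/2}$ disappears; moreover $2H_1-1>1/2$, so $\max\left(\frac12,\,1-2(1-H_1)/q\right)=2H_1-1$ and the maximum in~(\ref{e:nu}) is resolved.

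Next I would compute, using $1-H_2=2(1-H_1)$ and $q=1$, that
\[
\nu_1=\frac{2(1-H_1)}{2}-1+(2H_1-1)=H_1-1<0,\qquad
\nu_2=\frac{1-H_2}{1+q}=1-H_1>0,
\]
and also $H_2-H_1=(2H_1-1)-H_1=H_1-1<0$. Plugging these into Theorem~\ref{th:main} (with $\delta=0$), Case~(\ref{item:V1}) becomes $\gamma_N^{H_1-1}\ll N^{H_1-1}$, Case~(\ref{item:V3}) becomes $N^{H_1-1}\ll\gamma_N^{H_1-1}\ll N^{1-H_1}$, and Case~(\ref{item:V2}) becomes $\gamma_N^{H_1-1}\gg N^{1-H_1}$. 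The last step is to invert these: since the exponent $H_1-1$ is negative, $\gamma_N^{H_1-1}\ll N^{H_1-1}$ is equivalent to $\gamma_N\gg N$; the two-sided bound is equivalent to $\gamma_N\ll N$ on the left and to $(\gamma_N N)^{H_1-1}\to0$, i.e. $\gamma_N\gg N^{-1}$, on the right, hence to $N^{-1}\ll\gamma_N\ll N$; and $\gamma_N^{H_1-1}\gg N^{1-H_1}$, i.e. $(\gamma_N N)^{H_1-1}\to\infty$, is equivalent to $\gamma_N\ll N^{-1}$. Reading off the conclusions of Theorem~\ref{th:main} in each case gives exactly the three statements of the corollary, and the three ranges of $\gamma_N$ are exhaustive.

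The only point demanding care is the bookkeeping of signs when inverting inequalities with the negative exponent $H_1-1$, together with the preliminary remark that the model forces $H_1>3/4$, so that the maximum and the logarithmic correction in Theorem~\ref{th:main} drop out. Beyond that there is no real obstacle: the corollary is a transcription of Theorem~\ref{th:main}.
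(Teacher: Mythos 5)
Your proposal is correct and follows essentially the same route as the paper: specialize Theorem~\ref{th:main} by computing $\nu_1=H_1-1$, $\nu_2=1-H_1$ and $H_2-H_1=H_1-1$, note that $H_2>1/2$ forces $H_1>3/4$ (hence $\delta=0$ and the maximum resolves to $2H_1-1$), and invert the three conditions taking into account the negative exponent. The only cosmetic difference is that the paper carries the ratios $\nu_1/(H_2-H_1)=1$ and $\nu_2/(H_2-H_1)=-1$ explicitly while you invert the inequalities directly; the substance is identical.
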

\begin{proof}
Observe that if $H_2=2H_1-1$ and $q=1$, one has $H_2-H_1=H_1-1<0$. In addition,
the expression of the two exponents $\nu_1,\nu_2$  in~(\ref{e:nu}) can be simplified as follows~:
\[
\nu_1=\max\left(\frac{1}{2}-H_1,H_1-1\right)=-\min\left(H_1-\frac{1}{2},1-H_1\right) \quad \mbox{ and } \quad\nu_2=1-H_1.
\]
Then $\nu_1/(H_2-H_1)=\min\left(1,\frac{H_1-1/2}{1-H_1}\right)$ and
$\nu_2/(H_2-H_1)=-1$. Moreover observe that $H_2=2H_1-1>1/2$ implies
$H_1>3/4$ which in turns implies
$\min\left(1,\frac{H_1-1/2}{1-H_1}\right)=1$ and $\delta=0$. Thus $\nu_1/(H_2-H_1)=1$ and
$\nu_2/(H_2-H_1)=-1$.  Corollary~\ref{cor:specialcase} then follows from
Theorem~\ref{th:main}.
\end{proof}
\end{example}

\subsection{Independent case}

Theorem~\ref{th:main} should be contrasted with the following result involving
independent processes.
\begin{theorem}\label{thm:Vn-independent-case}
Assume that the process $\tilde{Z}^{H_{2},q+1}$ is an independent copy of $Z^{H_{2},q+1}$.
Let
$$
\tilde
V_N=V_N({Z}^{H_{1},q}+\tilde{Z}^{H_{2},q+1})
$$
 and
 $$
 \tilde V_N^{(2)}=V_N(\tilde{Z}^{H_{2},q+1}).
 $$
% \begin{align*}
% \tilde
% V_N&=V_N({Z}^{H_{1},q}+\tilde{Z}^{H_{2},q+1})\\
% &=\sum_{i=1}^N\left(\left({Z}_{t_{i+1}}^{H_{1},q}+\tilde{Z}_{t_{i+1}}^{H_{2},q+1}-{Z}_{t_{i}}^{H_{1},q}-\tilde{Z}_{t_{i}}^{H_{2},q+1}\right)^2
% -\mathbb{E}\left[\left({Z}_{t_{i+1}}^{H_{1},q}+\tilde{Z}_{t_{i+1}}^{H_{2},q+1}-{Z}_{t_{i}}^{H_{1},q}-\tilde{Z}_{t_{i}}^{H_{2},q+1}\right)^2\right]\right)
% \;.
% \end{align*}
Define $\delta$ as in~(\ref{eq:deltadef}) and $\nu_1,\nu_2$ as in~(\ref{e:nu}).
We have the following asymptotic equivalence as $N\to\infty$~:
 \begin{enumerate}
  \item\label{item:V1indep} If $\gamma_N^{2(H_2-H_1)}\ll N^{\nu_1+\nu_2}\,(\log N)^{\delta/2}$ then
$$
\tilde V_N=V_N^{(1)} \; (1+o_P(1))\;.
$$
\item\label{item:V2indep} If $\gamma_N^{2(H_2-H_1)}\gg N^{\nu_1+\nu_2}\,(\log N)^{\delta/2}$ then
$$
\tilde V_N=\tilde V_N^{(2)} \; (1+o_P(1))\;.
$$
\end{enumerate}
\end{theorem}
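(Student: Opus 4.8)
The plan is to follow the proof of Theorem~\ref{th:main}, with the dependent cross--term replaced by the independent one and Theorem~\ref{thm:indep-case} used in place of Theorem~\ref{thm:VN3}. First I decompose, exactly as in~(\ref{e:splitVNq}),
\[
\tilde V_N = V_N^{(1)} + \tilde V_N^{(2)} + 2\tilde V_N^{(3)}\;,
\]
where $\tilde V_N^{(2)}=V_N(\tilde Z^{H_2,q+1})$ and $\tilde V_N^{(3)}$ is the cross--term appearing in Theorem~\ref{thm:indep-case}. Since $\tilde Z^{H_2,q+1}$ is equal in distribution to $Z^{H_2,q+1}$, the sequence $\tilde V_N^{(2)}$ is equal in law to $V_N^{(2)}$; in particular $\mathbb{E}[(\tilde V_N^{(2)})^2]^{1/2}=\sigma_N^{(2)}$, and by Theorem~\ref{thm:VN12}, $\tilde V_N^{(2)}/\sigma_N^{(2)}$ converges in distribution to a Rosenblatt random variable, with $\sigma_N^{(2)}\sim a(q+1,H_2)\,N^{1-2(1-H_2)/(q+1)}\gamma_N^{2H_2}$.

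Next I show that the cross--term is always asymptotically negligible. By Theorem~\ref{thm:indep-case}, $\mathbb{E}[(\tilde V_N^{(3)})^2]=o(\sigma_N^{(1)}\sigma_N^{(2)})$; since $\sqrt{\sigma_N^{(1)}\sigma_N^{(2)}}\le\max(\sigma_N^{(1)},\sigma_N^{(2)})$, this gives $\mathbb{E}[(\tilde V_N^{(3)})^2]^{1/2}=o\big(\max(\sigma_N^{(1)},\sigma_N^{(2)})\big)$. Hence the whole analysis reduces to comparing $\sigma_N^{(1)}$ and $\sigma_N^{(2)}$. From~(\ref{e:sigma1}), (\ref{e:ros2b}) and the identity $\nu_1+\nu_2=h_1-1+2(1-H_2)/(q+1)$, which is immediate from~(\ref{e:nu}) and~(\ref{eq:h1def}), one gets
\[
\frac{\sigma_N^{(1)}}{\sigma_N^{(2)}}\sim\frac{a(q,H_1)}{a(q+1,H_2)}\cdot\frac{N^{\nu_1+\nu_2}\,(\log N)^{\delta/2}}{\gamma_N^{2(H_2-H_1)}}\;.
\]
Consequently, under the hypothesis of Case~(\ref{item:V1indep}) we have $\sigma_N^{(1)}\gg\sigma_N^{(2)}$ (and hence also $\sigma_N^{(1)}\gg\sqrt{\sigma_N^{(1)}\sigma_N^{(2)}}$), while under that of Case~(\ref{item:V2indep}) we have $\sigma_N^{(2)}\gg\sigma_N^{(1)}$.

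To finish I invoke the elementary observation that if $W_N=W_N'+R_N$ with $\mathbb{E}[R_N^2]^{1/2}=o\big(\mathbb{E}[(W_N')^2]^{1/2}\big)$ and $W_N'/\mathbb{E}[(W_N')^2]^{1/2}$ converges in distribution to a law having no atom at $0$, then $R_N/W_N'\to0$ in probability, whence $W_N=W_N'(1+o_P(1))$; indeed $R_N/W_N'$ is the product of $R_N/\mathbb{E}[R_N^2]^{1/2}$ (which is $O_P(1)$), the deterministic factor $\mathbb{E}[R_N^2]^{1/2}/\mathbb{E}[(W_N')^2]^{1/2}\to0$, and $\mathbb{E}[(W_N')^2]^{1/2}/W_N'$ (which is $O_P(1)$ precisely because the limit law charges no point mass). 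In Case~(\ref{item:V1indep}) I apply this with $W_N'=V_N^{(1)}$ and $R_N=\tilde V_N^{(2)}+2\tilde V_N^{(3)}$; by the previous paragraph and Minkowski's inequality $\mathbb{E}[R_N^2]^{1/2}=o(\sigma_N^{(1)})$, and the relevant limit (normal or Rosenblatt) of Theorem~\ref{thm:VN12} is absolutely continuous. In Case~(\ref{item:V2indep}) I apply it with $W_N'=\tilde V_N^{(2)}$ and $R_N=V_N^{(1)}+2\tilde V_N^{(3)}$, where $\mathbb{E}[R_N^2]^{1/2}=o(\sigma_N^{(2)})$ and the Rosenblatt limit is again absolutely continuous (see~\cite{veillette13}).

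The argument is essentially a corollary of Theorems~\ref{thm:VN12} and~\ref{thm:indep-case} together with the ratio computation above, so I do not expect a serious obstacle. The only point needing care beyond routine bookkeeping is the absolute-continuity input in the last paragraph: one must know that the limiting laws put no mass at $0$ in order to upgrade the $L^2$-domination of the subordinate terms into the stated multiplicative form $\tilde V_N=W_N'(1+o_P(1))$.
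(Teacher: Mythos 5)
Your proof is correct and follows essentially the same route as the paper: you invoke Theorem~\ref{thm:indep-case} to dismiss the cross--term and then reduce the statement to the comparison of $\sigma_N^{(1)}$ and $\sigma_N^{(2)}$ via the ratio $N^{\nu_1+\nu_2}(\log N)^{\delta/2}/\gamma_N^{2(H_2-H_1)}$. The only difference is that you spell out two details the paper leaves implicit, namely the passage from $\mathbb{E}[(\tilde V_N^{(3)})^2]=o(\sigma_N^{(1)}\sigma_N^{(2)})$ to $o(\max(\sigma_N^{(1)},\sigma_N^{(2)}))$ and the no-atom-at-zero argument needed to upgrade $L^2$-domination to the multiplicative form $(1+o_P(1))$; both are correct.
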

\begin{proof}
  By Theorem~\ref{thm:indep-case}, we only need to compare $V_N^{(1)}=V_N({Z}^{H_{1},q})$
  and $\tilde V_N^{(2)}=V_N(\tilde{Z}^{H_{2},q+1})\overset{d}{=}V_N({Z}^{H_{2},q+1})=V_N^{(2)}$.  Using
  Theorem~\ref{thm:VN12} as in the proof of Theorem~\ref{th:main}, the ratio between~(\ref{eq:1vs3})
  and~(\ref{eq:2vs3}), gives that, as $N\to\infty$,
$$
\frac{\sigma_N^{(1)}}{\sigma_N^{(2)}}\sim c\;\frac{N^{\nu_1+\nu_2}\,(\log N)^{\delta/2}}{\gamma_N^{2(H_2-H_1)}}\;,
$$
where $c$ is a positive constant.
This concludes the proof  of Theorem~\ref{thm:Vn-independent-case}.
\end{proof}

\begin{remark}\label{rem:nu1pnu2sign}
  Note that $\nu_1+\nu_2$ can be positive, zero, or negative. In fact,
$$
\nu_1+\nu_2=
\begin{cases}
  \frac{2(1-H_2)}{1+q}-\frac12&\text{ if $q=1$ and $H_1<3/4$}\\
  \frac{2(1-H_2)}{1+q}-\frac{2(1-H_1)}{q}&\text{ otherwise.}
\end{cases}
$$
It follows that
$$
\nu_1+\nu_2\leq0 \Longleftrightarrow H_2\geq1-\frac{(q+1)(1-H_1)}{q}\;,
$$
with equality on the left-hand side if and only if there is equality on the
right-hand side. The equality case corresponds to having $(H_1,H_2)$ on the segment
with end points $(1-q/(2(q+1)),1/2)$ and $(1,1)$, see
Figure~\ref{fig:nu1pnu2sign}. The $H_1$ coordinate of the
bottom end point is   $1-q/(2(q+1))$. For $q=1$ it equals
$3/4$ and, as $q\to\infty$, it decreases towards $1/2$. In contrast with the
dependent case described in Remark~\ref{rem:nu1sign}, the bottom of the boundary lines are
pushed to the left, with half the slopes, compare Figures~\ref{fig:nu1sign} and~\ref{fig:nu1pnu2sign}.
\end{remark}

\begin{figure}[h]
  \centering
  \includegraphics[width=0.8\textwidth]{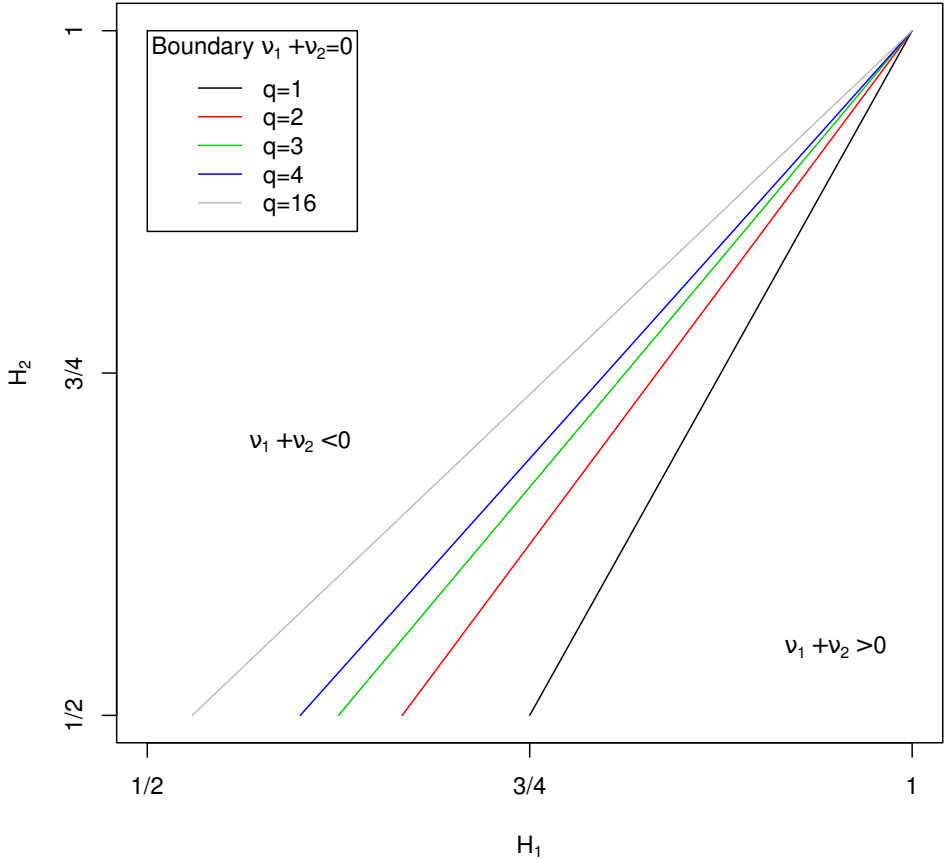}
  \caption{Domains of points $(H_1,H_2)$ where the signs of $\nu_1+\nu_2$ is negative
    or positive. The lines show the boundary between these sets. The right-hand
    line corresponds to $q=1$ and the left-hand line to $q=16$. The processes
    here are independent.}
  \label{fig:nu1pnu2sign}
\end{figure}

% \section{Examples}
% \label{sec:exples}

We now illustrate Theorem~\ref{thm:Vn-independent-case} where the sum of two independent
processes is considered.
\begin{example}\label{exple:h1eqh2-indep}
 If $H_1=H_2$, by Remark~\ref{rem:nu1pnu2sign}, we always have
 $\nu_1+\nu_2<0$. Thus the dominant part of $\tilde V_N$ is always $\tilde
 V_N^{(2)}$. By
  Theorem~\ref{thm:VN12}, we conclude that the limit of the normalized quadratic
  variation of the sum of two \emph{independent} Hermite processes with the same self-similarity
  index and successive orders is asymptotically Rosenblatt.
\end{example}
\begin{example}\label{exple:gammaeq1-indep}
  When $\gamma_N=1$, the asymptotic behavior of the quadratic variation depends
  on the sign of $\nu_1+\nu_2$. If $\nu_1+\nu_2<0$, we are in
  Case~\ref{item:V2indep} of Theorem~\ref{thm:Vn-independent-case}, the
  dominant part of $V_{N}$ is $\tilde V_{N} ^{(2)}$ and the limit is
  asymptotically Rosenblatt by Theorem~\ref{thm:VN12}. If $\nu_1+\nu_2>0$, we
  are in Case~\ref{item:V1indep} of Theorem~\ref{thm:Vn-independent-case}, the
  dominant part of $V_{N}$ is $V_{N}^{(1)}$ and by Theorem~\ref{thm:VN12}, the
  limit is Rosenblatt. Indeed, $\nu_1+\nu_2>0$ excludes the case $q=1$ and $H_1\leq 3/4$, see
  Figure~\ref{fig:nu1pnu2sign}.
\end{example}
\begin{remark}
In Examples~\ref{ex2} and~\ref{exple:h1eqh2-indep}, we considered the
setting $H_1=H_2$ in the dependent and independent cases. We see that the
corresponding limits always differ, it is Gaussian in the dependent case and it
is Rosenblatt in the independent case.

The contrast between Examples~\ref{ex:gammaeq1} and
~\ref{exple:gammaeq1-indep}, which both correspond to the setting $\gamma_N=1$
is a bit more involved. If $\nu_1>0$, then $\nu_1+\nu_2>0$ and we have the same
asymptotic behavior in both cases and the asymptotic limit is Rosenblatt.
On the other hand, if $\nu_1<0$, then, in
the dependent case we have a Gaussian limit and in the independent case we
again have a Rosenblatt limit.
\end{remark}

\section{Proof  of Theorem~\ref{thm:VN3}}\label{s:proof:VN3}
The proof of Theorem~\ref{thm:VN3} is based on its decomposition in Wiener chaos of the cross term $V_N^{(3)}$. We first need some notation~:
For any $q$ and $(H_1,H_2)\in (1/2,1)^2$, set
\begin{equation}\label{e:star:indices}
H_1^*(q)=\frac{1-H_{1}}{q}+\frac{1-H_{2}}{q+1}\;.%,\,H_2^*(q)=\frac{1}{2}-\frac{1-H_{2}}{q+1}\;.
\end{equation}
The function $\widetilde{\beta}_{a,b}$ will appear as part of the kernel
involved in the Wiener chaos expansion of $V_N^{(3)}$. It is defined
on $\mathbb{R}^2\setminus\{(u,v),u=v\}$ for any $a,b>-1$ such that $a+b<-1$ as~:
\begin{equation}\label{e:betat}
\widetilde{\beta}_{a,b}(u,v)=\left\{\begin{array}{l}\beta(a+1,-1-a-b)\mbox{ if }u<v,\\
\beta(b+1,-1-a-b)\mbox{ if }v<u\;,\end{array}\right.
\end{equation}
where $\beta$ denotes the beta function
\[
\beta(x,y)=\int_0^1 t^{x-1}(1-t)^{y-1}\rmd t=\frac{\Gamma(x)\Gamma(y)}{\Gamma(x+y)},\,x,y>0\;.
\]
\begin{prop}\label{pro:WienerChaos}
The sum $V_{N}^{(3)}$ admits the following expansion into Wiener chaos~:
\begin{equation}\label{e:WienerChaos}
V_{N}^{(3)}= \sum_{k=0} ^{q} V_{N} ^{(3,k)}\;,
\end{equation}
where for every $k=0,\cdots, q$,
\begin{equation}\label{e:VN3k}
V_{N}^{(3,k)}=M(k,q,H_{1},H_{2})I_{2q+1-2k}\left(\sum_{i=0} ^{N-1}f_{N,i}^{(k)}\right)\;,
\end{equation}
with
\begin{eqnarray*}
f_{N,i}^{(k)}(y_1,\cdots,y_{2q+1-2k})&=&\int_{t_{i}} ^{t_{i+1}} \int_{t_{i}}^{t_{i+1}}
\left[\prod_{i=1}^{q-k}(u-y_{i}) _{+} ^{-(\frac{1}{2}+ \frac{1-H_{1}}{q})}\right]\left[\prod_{i=q-k+1}^{2q+1-2k}(v-y_{i}) _{+} ^{-(\frac{1}{2}+ \frac{1-H_{2}}{q+1})}\right] \\
&&\times\left[\widetilde{\beta}_{-(\frac{1}{2}+ \frac{1-H_{1}}{q}),-(\frac{1}{2}+ \frac{1-H_{2}}{q+1})}(u,v)\right]^k\vert u-v\vert ^{-k H_1^*(q)}\rmd u\rmd v\;,
\end{eqnarray*}
where $\widetilde{\beta}_{a,b}$ has been defined in~(\ref{e:betat}) and,
defining $c(H,q)$ as in~(\ref{eq:norm-constant}),
\[
M(k,q, H_{1}, H_{2} )= c(H_{1}, q) c(H_{2}, q+1)k!{{q}\choose{k}}{{q+1}\choose{k}}\;.
\]
\end{prop}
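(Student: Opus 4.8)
The plan is to obtain~(\ref{e:WienerChaos}) by applying the product formula for multiple Wiener--It\^o integrals to each increment-product appearing in $V_N^{(3)}$ and then collecting the resulting terms chaos by chaos. First I would rewrite the increments as multiple integrals with explicit kernels: using~(\ref{e:zq}) and the fact that the kernel~(\ref{e:kern}) is an integral over $[0,t]$, the increment of $Z^{H_1,q}$ on $[t_i,t_{i+1}]$ equals $c(H_1,q)I_q(\ell_i^{(1)})$ and that of $Z^{H_2,q+1}$ equals $c(H_2,q+1)I_{q+1}(\ell_i^{(2)})$, where
\[
\ell_i^{(1)}(y_1,\dots,y_q)=\int_{t_i}^{t_{i+1}}\prod_{j=1}^q(u-y_j)_+^{-\alpha_1}\,\rmd u,\qquad \ell_i^{(2)}(z_1,\dots,z_{q+1})=\int_{t_i}^{t_{i+1}}\prod_{j=1}^{q+1}(v-z_j)_+^{-\alpha_2}\,\rmd v,
\]
with $\alpha_1=\tfrac12+\tfrac{1-H_1}{q}$, $\alpha_2=\tfrac12+\tfrac{1-H_2}{q+1}$, both in $(1/2,1)$ since $H_1,H_2\in(1/2,1)$. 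These kernels are symmetric, so the product formula (Appendix~\ref{s:appendix}) gives, since $q\wedge(q+1)=q$,
\[
I_q(\ell_i^{(1)})I_{q+1}(\ell_i^{(2)})=\sum_{k=0}^q k!\binom{q}{k}\binom{q+1}{k}I_{2q+1-2k}\bigl(\ell_i^{(1)}\otimes_k\ell_i^{(2)}\bigr),
\]
where $\otimes_k$ is the $k$-th contraction and $I_{2q+1-2k}$ is applied, as usual, to the symmetrization of its (generally non-symmetric) argument, which does not change the value.

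The heart of the computation is to check that $\ell_i^{(1)}\otimes_k\ell_i^{(2)}=f_{N,i}^{(k)}$. Writing out the contraction and using Fubini, one obtains an integral over $u,v\in[t_i,t_{i+1}]$ of the two uncontracted products $\prod_{j=1}^{q-k}(u-y_j)_+^{-\alpha_1}$ and $\prod_{j=q-k+1}^{2q+1-2k}(v-y_j)_+^{-\alpha_2}$, multiplied by the $k$-fold integral over the contracted variables $s_1,\dots,s_k$; since that integrand is a product, the $k$-fold integral factors into the $k$-th power of $J(u,v):=\int_{\mathbb{R}}(u-s)_+^{-\alpha_1}(v-s)_+^{-\alpha_2}\,\rmd s$. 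It then remains to evaluate $J$ for $u\neq v$: for $u<v$ the integrand is supported on $\{s<u\}$, and the substitution $s=u-(v-u)w$ turns $J$ into $(v-u)^{1-\alpha_1-\alpha_2}\int_0^\infty w^{-\alpha_1}(1+w)^{-\alpha_2}\,\rmd w=(v-u)^{1-\alpha_1-\alpha_2}\,\beta(1-\alpha_1,\alpha_1+\alpha_2-1)$, and the case $v<u$ is symmetric. Since $1-\alpha_1-\alpha_2=-H_1^*(q)$ and $\alpha_1,\alpha_2\in(1/2,1)$ ensure $1-\alpha_1,1-\alpha_2>0$, $\alpha_1+\alpha_2-1=H_1^*(q)>0$ and $-\alpha_1-\alpha_2<-1$, this is exactly $J(u,v)=\widetilde\beta_{-\alpha_1,-\alpha_2}(u,v)\,|u-v|^{-H_1^*(q)}$, so its $k$-th power reproduces the factor $[\widetilde\beta_{-\alpha_1,-\alpha_2}(u,v)]^k|u-v|^{-kH_1^*(q)}$ of $f_{N,i}^{(k)}$.

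To conclude, I would multiply through by $c(H_1,q)c(H_2,q+1)$, sum over $i=0,\dots,N-1$, and interchange the two finite sums, obtaining $V_N^{(3)}=\sum_{k=0}^q V_N^{(3,k)}$ with $V_N^{(3,k)}=M(k,q,H_1,H_2)I_{2q+1-2k}(\sum_i f_{N,i}^{(k)})$ and $M(k,q,H_1,H_2)=c(H_1,q)c(H_2,q+1)\,k!\binom{q}{k}\binom{q+1}{k}$; since the orders $2q+1-2k$, $k=0,\dots,q$, are pairwise distinct (indeed all positive and odd), this is the genuine Wiener chaos decomposition. The main obstacle is not a single identity but the bookkeeping needed to justify the steps: checking that the product formula applies (the two increments lie in fixed chaoses, hence belong to every $L^p$, so their product and all contractions are square-integrable), that Fubini's theorem is legitimate in computing the contraction, that $J(u,v)$ is finite for $u\neq v$, and that $\sum_i f_{N,i}^{(k)}\in L^2(\mathbb{R}^{2q+1-2k})$ so that $I_{2q+1-2k}$ is well defined — all of these being consequences of the integrability range $\alpha_1,\alpha_2\in(1/2,1)$, of the same nature as the estimates used to define the Hermite processes in Section~\ref{prel}.
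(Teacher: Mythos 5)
Your proposal is correct and follows essentially the same route as the paper: express the increments as multiple integrals, apply the product formula~(\ref{e:product}), compute the $k$-th contraction via Fubini so that the contracted variables factor into the $k$-th power of $\int_{-\infty}^{u\wedge v}(u-s)^{-\alpha_1}(v-s)^{-\alpha_2}\,\rmd s$, and evaluate that integral with the substitution $s=u-(v-u)w$ to get $\widetilde{\beta}_{-\alpha_1,-\alpha_2}(u,v)\,|u-v|^{-H_1^*(q)}$. The only cosmetic difference is that the paper isolates this last evaluation as Lemma~\ref{lem:beta:tilde} while you prove it inline, and you additionally spell out the integrability bookkeeping that the paper leaves implicit.
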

\begin{proof}
Using the integral expression~(\ref{e:zq}) of the two Hermite processes $Z^{(q,H_1)}$ and $Z^{(q+1,H_2)}$ and by definition~(\ref{e:VN3}) of the sum $V_N^{(3)}$, we get that
\[
\bar V_{N}^{(3)} :=\frac{V_{N}^{(3)}}{c(H_{1}, q) c(H_{2}, q+1)}=\sum_{i=0}^{N-1} I_{q}\left(L_{t_{i+1}}^{H_{1},q}-L_{t_{i}}^{H_{1},q}\right)I_{q+1}\left( L_{t_{i+1}}^{H_{2}, q+1} - L_{t_{i}}^{H_{2}, q+1}\right)\;,
\]
where the two kernels $L_t^{H_1,q}$, $L_t^{H_2,q+1}$ are defined in~(\ref{e:kern}). We now use the product formula~(\ref{e:product}) and deduce that
\begin{align}\nonumber
\bar V_{N}^{(3)}&=\sum_{i=0} ^{N-1}\left[\sum_{k=0} ^{q} k! {{q}\choose{k}} {{q+1}\choose{k}}I_{2q+1-2k}\left((L_{t_{i+1}}^{H_{1},q} - L_{t_{i}} ^{H_{1}, q})\otimes _{k}( L_{t_{i+1}} ^{H_{2}, q+1} - L_{t_{i}} ^{H_{2}, q+1})\right)\right]\\
\label{e:V3N}
&=\sum_{k=0} ^{q} k! {{q}\choose{k}} {{q+1}\choose{k}}I_{2q+1-2k}\left(\sum_{i=0} ^{N-1}(L_{t_{i+1}}^{H_{1},q} - L_{t_{i}} ^{H_{1}, q})\otimes _{k}( L_{t_{i+1}} ^{H_{2}, q+1} - L_{t_{i}} ^{H_{2}, q+1})\right)\;.
\end{align}
\noindent To get an explicit expression for each term
\[
\left(L_{t_{i+1}}^{H_{1},q}-L_{t_{i}}^{H_{1}, q}\right)\otimes _{k}\left(L_{t_{i+1}}^{H_{2},q+1}- L_{t_{i}}^{H_{2},q+1}\right)\;,
\]
we use the definition of the $\otimes_k$ product given in the appendix~:
\begin{eqnarray*}
&&\left[\left(L_{t_{i+1}}^{H_{1},q}-L_{t_{i}}^{H_{1},q}\right)\otimes_{k}\left(L_{t_{i+1}}^{H_{2},q+1}-L_{t_{i}}^{H_{2},q+1} \right)\right] (y_{1},\cdots, y_{2q+1-2k} ) \\
&=&\int_{\mathbb{R}^{k}}\left(L_{t_{i+1}}^{H_{1},q} - L_{t_{i}}^{H_{1},q} \right)(y_{1},\cdots, y_{q-k}, x_{1},\cdots, x_{k})\\
&&\times\left(L_{t_{i+1}}^{H_{2},q+1}-L_{t_{i}}^{H_{2},q+1}\right)(y_{q-k+1},\cdots,y_{2q+1-2k}, x_{1},\cdots, x_{k} )\rmd x_{1}\cdots \rmd x_{k}\;.
\end{eqnarray*}
Using the specific form~(\ref{e:kern}) of the kernel $L_t^{H,q}$ and the Fubini Theorem, the last formula reads
\begin{eqnarray*}
&&\left[\left(L_{t_{i+1}}^{H_{1},q}-L_{t_{i}}^{H_{1},
      q}\right)\otimes_{k}\left(L_{t_{i+1}}^{H_{2},q+1}-L_{t_{i}}^{H_{2},q+1}\right)\right](y_{1},\cdots,
y_{2q+1-2k}) \\
&=&\int_{t_{i}}^{t_{i+1}}\int_{t_{i}}^{t_{i+1}}\left[\prod_{i=1}^{q-k}(u-y_{i})_{+}^{-(\frac{1}{2}+\frac{1-H_{1}}{q})}\right]\left[\prod_{i=q-k+1}^{2q+1-2k}(v-y_{i})_{+}^{-(\frac{1}{2}+\frac{1-H_{2}}{q+1})}\right]\\
&&\times\left[\prod_{i=1}^k\int_{\mathbb{R}}(u-x_i) _{+} ^{-(\frac{1}{2}+ \frac{1-H_{1}}{q})}(v-x_i)_{+}^{-(\frac{1}{2}+ \frac{1-H_{2}}{q+1})}\rmd x_i\right]\rmd u\rmd v\\
&=&\int_{t_{i}}^{t_{i+1}}\int_{t_{i}}^{t_{i+1}}\left[\prod_{i=1}^{q-k}(u-y_{i})_{+}^{-(\frac{1}{2}+ \frac{1-H_{1}}{q})} \right]\left[\prod_{i=q-k+1}^{2q+1-2k}(v-y_{i})_{+}^{-(\frac{1}{2}+\frac{1-H_{2}}{q+1})}\right]\\
&&\times\left[\int_{-\infty} ^{u\wedge v}(u-x)^{-(\frac{1}{2}+ \frac{1-H_{1}}{q})} (v-x)^{-(\frac{1}{2}+\frac{1-H_{2}}{q+1})}\rmd x\right]^{k}\rmd u\rmd v\;.
\end{eqnarray*}
But for any real numbers $a,b>-1$ such that $a+b<-1$, Lemma~\ref{lem:beta:tilde} implies that
\begin{equation}\label{e:fct:beta}
\int_{-\infty}^{u\wedge v}(u-x)^a(v-x)^b\rmd x=\widetilde{\beta}_{a,b}(u,v)|u-v|^{a+b+1}\;,
\end{equation}
where the function $\widetilde{\beta}_{a,b}$ has been defined in~(\ref{e:betat}). Hence
\begin{eqnarray*}
&&\left[  \left( L_{t_{i+1}}^{H_{1}, q} - L_{t_{i}}^{H_{1}, q}\right)\otimes_{k} \left(L_{t_{i+1}}^{H_{2},q+1}- L_{t_{i}}^{H_{2},q+1} \right)\right](y_{1},\cdots, y_{2q+1-2k})\\
&=&\int_{t_{i}}^{t_{i+1}}\int_{t_{i}}^{t_{i+1}}\left[\prod_{i=1}^{q-k}(u-y_{i})_{+}^{-\left(\frac{1}{2}+\frac{1-H_{1}}{q}\right)}\right]
\left[\prod_{i=q-k+1}^{2q+1-2k}(v-y_{i})_{+}^{-\left(\frac{1}{2}+\frac{1-H_{2}}{q+1}\right)}\right]\\
&&\times\left[\widetilde{\beta}_{-(\frac{1}{2}+ \frac{1-H_{1}}{q}),-(\frac{1}{2}+\frac{1-H_{2}}{q+1})}(u,v)\right]^{k}\vert u-v\vert^{-k H_1^*(q)}\rmd u\rmd v\;,
\end{eqnarray*}
where we defined $H_1^*$ in~(\ref{e:star:indices}). Combining this equality and relation~(\ref{e:V3N}) then leads to the decomposition~(\ref{e:WienerChaos}) of the sum $V^{(3)}_N$. This completes the proof of Proposition~\ref{pro:WienerChaos}.
\end{proof}

We now bound the $L^2$--norm of $V^{(3,k)}_N$ for any $k=0,\cdots,q$ and deduce
that the terms $V^{(3,k)}_N$ are for all $k<q$ negligible with respect to
$V^{(3,q)}_N$. Proposition~\ref{pro:NT} below then directly implies
Theorem~\ref{thm:VN3}. We set
\begin{equation}\label{e:varepsilon}
\varepsilon(\alpha)=\left\{\begin{array}{l}1\mbox{ if }\alpha=1,\\0\mbox{ otherwise}.\end{array}\right.
\end{equation}
\begin{prop}\label{pro:NT}
For any $k=0,\cdots,q-1$,
\begin{equation}\label{e:UB:VN3k}
\|V^{(3,k)}_N\|^2\leq\;C\;N^{\left[2-\min\left(\alpha,1\right)\right]}(\log(N))^{\varepsilon(\alpha)}\gamma _{N}^{2H_1+2H_2}\;,
\end{equation}
where $\varepsilon$ is defined in~(\ref{e:varepsilon}),
\begin{equation}
  \label{eq:alpha}
\alpha=2(q-k)(1-H_1)/q+2(q+1-k)(1-H_2)/(q+1)
\end{equation}
and as $N\to\infty$,
\begin{equation}\label{e:UB:VN3q}
N^{-2+\frac{2(1-H_{2}) }{q+1}}\gamma _{N}^{-2(H_1+H_2)}\|V^{(3,q)}_N\|^2\rightarrow  b^2(H_1,H_2,q)\;\;,
\end{equation}
for some $b(H_1,H_2,q)>0$. The leading term is the one with $k=q$. Moreover, $V^{(3,q)}_N$ is a Gaussian random variable, and thus
\[
N^{\frac{(1-H_{2}) }{q+1}-1}\gamma _{N}^{-(H_1+H_2)}V^{(3,q)}_N\overset{(d)}{\longrightarrow}b(H_1,H_2,q)\;\mathcal{N}(0,1)\;.
\]
\end{prop}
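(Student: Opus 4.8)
The plan is to combine the chaos decomposition $V_N^{(3)}=\sum_{k=0}^{q}V_N^{(3,k)}$ of Proposition~\ref{pro:WienerChaos} with the isometry formula for multiple Wiener--It\^o integrals, which turns the statement into the estimation of the double sums $\sum_{i,j=0}^{N-1}\langle f_{N,i}^{(k)},f_{N,j}^{(k)}\rangle$ of $L^2$ inner products of the explicit kernels in~(\ref{e:VN3k}). Two elementary observations come first. The kernels $f_{N,i}^{(k)}$ are nonnegative: the factors $(u-y)_+^{\,\cdot}$ are nonnegative and, since the exponents make the two Beta arguments positive, $\widetilde\beta_{a,b}$ is positive; hence every inner product is $\ge0$ and no cancellation occurs. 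Moreover, since symmetrization is an $L^2$ contraction, the isometry gives $\|V_N^{(3,k)}\|^2\le M(k,q,H_1,H_2)^2(2q+1-2k)!\sum_{i,j}\langle f_{N,i}^{(k)},f_{N,j}^{(k)}\rangle$, with equality (and $1!$ in place of the factorial) when $k=q$, since then there is a single variable.

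To compute the inner product I would expand $f_{N,i}^{(k)}$ and $f_{N,j}^{(k)}$ by~(\ref{e:VN3k}) and use Fubini to integrate out the $2q+1-2k$ shared variables. By the Beta integral of Lemma~\ref{lem:beta:tilde} with equal exponents, each of the $q-k$ ``type $H_1$'' variables produces a factor $\mathrm{const}\times|u-u'|^{-2(1-H_1)/q}$ and each of the $q+1-k$ ``type $H_2$'' variables a factor $\mathrm{const}\times|v-v'|^{-2(1-H_2)/(q+1)}$, leaving a fourfold integral over $(u,v)\in[t_i,t_{i+1}]^2$ and $(u',v')\in[t_j,t_{j+1}]^2$ of
$$|u-u'|^{-\gamma_1}\,|v-v'|^{-\gamma_2}\,[\widetilde\beta(u,v)\widetilde\beta(u',v')]^k\,|u-v|^{-kH_1^*(q)}\,|u'-v'|^{-kH_1^*(q)},$$
where $\gamma_1=2(q-k)(1-H_1)/q$, $\gamma_2=2(q+1-k)(1-H_2)/(q+1)$, so $\gamma_1+\gamma_2=\alpha$. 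Since $H_1,H_2\in(1/2,1)$ one has $\gamma_1<1$, $\gamma_2<1$ and $qH_1^*(q)=(1-H_1)+q(1-H_2)/(q+1)<1$, so all the constant integrals below converge. For $|i-j|\ge2$ I bound $|u-u'|$ and $|v-v'|$ below by a fixed multiple of $\gamma_N|i-j|$ and integrate the (singular but integrable) factors $|u-v|^{-kH_1^*(q)}$ and $|u'-v'|^{-kH_1^*(q)}$ over the small squares; since $\widetilde\beta$ is invariant under $u\mapsto\gamma_N u$, this gives $\langle f_{N,i}^{(k)},f_{N,j}^{(k)}\rangle\le C\gamma_N^{\,4-\alpha-2kH_1^*(q)}|i-j|^{-\alpha}=C\gamma_N^{2H_1+2H_2}|i-j|^{-\alpha}$, the exponent identity being a direct rewriting of the definitions~(\ref{eq:alpha}) and~(\ref{e:star:indices}). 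For $|i-j|\le1$ all four variables lie in one or two adjacent intervals of length $\gamma_N$, and scaling gives the bound $C\gamma_N^{2H_1+2H_2}$. Summing, $\|V_N^{(3,k)}\|^2\le C\gamma_N^{2H_1+2H_2}\big(N+\sum_{i\ne j}|i-j|^{-\alpha}\big)\le C\gamma_N^{2H_1+2H_2}N^{2-\min(\alpha,1)}(\log N)^{\varepsilon(\alpha)}$, which is~(\ref{e:UB:VN3k}).

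For $k=q$ the chaos order $2q+1-2q=1$, so $V_N^{(3,q)}$ is a first-order Wiener--It\^o integral and hence \emph{exactly} Gaussian; moreover $q-k=0$, so there are no ``type $H_1$'' variables and the lone ``type $H_2$'' variable is integrated out exactly, leaving the same fourfold integral but with only the factor $|v-v'|^{-\gamma_2}$, $\gamma_2=2(1-H_2)/(q+1)<1$. For $|i-j|\ge2$ I replace $|v-v'|^{-\gamma_2}$ by $(\gamma_N|i-j|)^{-\gamma_2}$ at the cost of a $1+O(1/|i-j|)$ factor, which decouples the $(u,v)$ and $(u',v')$ integrals and gives $\langle f_{N,i}^{(q)},f_{N,j}^{(q)}\rangle=\kappa A^2\gamma_N^{2H_1+2H_2}|i-j|^{-\gamma_2}(1+O(1/|i-j|))$, where $\kappa=\kappa(H_2,q)>0$ is the Beta constant from the $H_2$-variable and $A=\int_{[0,1]^2}[\widetilde\beta(u,v)]^q|u-v|^{-qH_1^*(q)}\,\rmd u\,\rmd v\in(0,\infty)$; the near-diagonal terms and the $O(1/|i-j|)$ corrections together contribute only $O(N\gamma_N^{2H_1+2H_2})$, negligible since $\gamma_2<1$. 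Using $\sum_{i\ne j,\,0\le i,j\le N-1}|i-j|^{-\gamma_2}\sim 2N^{2-\gamma_2}/((1-\gamma_2)(2-\gamma_2))$ then yields $N^{-2+\gamma_2}\gamma_N^{-2(H_1+H_2)}\|V_N^{(3,q)}\|^2\to b^2(H_1,H_2,q)>0$ for an explicit constant $b^2$, which is~(\ref{e:UB:VN3q}). That $k=q$ is the leading term follows from $\alpha_k-\alpha_q=2(q-k)H_1^*(q)>0$ for $k<q$, which makes~(\ref{e:UB:VN3k}) an $o(N^{2-\gamma_2}\gamma_N^{2H_1+2H_2})$; and $N^{(1-H_2)/(q+1)-1}\gamma_N^{-(H_1+H_2)}V_N^{(3,q)}\overset{(d)}{\longrightarrow}b(H_1,H_2,q)\,\mathcal N(0,1)$ is then immediate, since $V_N^{(3,q)}$ is Gaussian with variance $b^2(H_1,H_2,q)\,N^{2-2(1-H_2)/(q+1)}\gamma_N^{2(H_1+H_2)}(1+o(1))$.

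The main obstacle is the bookkeeping of the multiple singular integrals in the last two steps: proving that all the relevant constant integrals over fixed unit cubes are finite (this is precisely where $H_1,H_2>1/2$ enters, through $\gamma_1,\gamma_2<1$ and $qH_1^*(q)<1$), and controlling the replacement $|v-v'|\approx\gamma_N|i-j|$ uniformly in $i,j$ so as to pin down the exact constant $b^2$. The factor $[\widetilde\beta]^k$ looks as though it could complicate the scaling, but it is in fact harmless, being bounded and invariant under the dilation $(u,v)\mapsto(\gamma_N u,\gamma_N v)$, so it only contributes to constants.
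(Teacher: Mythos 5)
Your strategy is essentially the paper's: the chaos decomposition of Proposition~\ref{pro:WienerChaos}, the isometry bound $\mathbb{E}[I_n(f)^2]\le n!\|f\|_2^2$ (with equality for $n=1$, whence the exact Gaussian term at $k=q$), integration of the shared variables via the Beta identity of Lemma~\ref{lem:beta:tilde}, the rescaling by $\gamma_N$ producing the factor $\gamma_N^{2H_1+2H_2}$ through the same exponent identity as~(\ref{e:sumH}), and then Riemann--sum asymptotics in $|i-j|$. Your direct treatment of the off--diagonal terms ($|i-j|\ge 2$, bounding $|u-u'|$ and $|v-v'|$ from below by $\gamma_N(|i-j|-1)$ and integrating the remaining singular factors over the two small squares) is a legitimate elementary substitute for Lemma~\ref{lem:riemman:sum}, and your limiting constant for $k=q$ reproduces the constant $2\gamma^2/((1-\alpha_1-\alpha_2)(2-\alpha_1-\alpha_2))$ of that lemma. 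The comparison $\alpha_k-\alpha_q=2(q-k)H_1^*(q)>0$ and the final convergence in distribution are also as in the paper.

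There is, however, one genuine gap: the finiteness of the near--diagonal constant integrals, most importantly the $i=j$ one, which after rescaling and absorbing the bounded $\widetilde\beta$ factors into a constant is
\[
\int_{[0,1]^4}|U-U'|^{-\gamma_1}\,|V-V'|^{-\gamma_2}\,|U-V|^{-kH_1^*(q)}\,|U'-V'|^{-kH_1^*(q)}\,\rmd U\,\rmd U'\,\rmd V\,\rmd V'
\]
with your $\gamma_1=2(q-k)(1-H_1)/q$ and $\gamma_2=2(q+1-k)(1-H_2)/(q+1)$. You justify convergence by $\gamma_1<1$, $\gamma_2<1$ and $kH_1^*(q)<1$, but for a cyclic product of four singular kernels on $[0,1]^4$ these individual conditions are \emph{not} sufficient: with all four exponents equal to $4/5$ the integrand is of order $\epsilon^{-16/5}$ on the set where all four variables are within $\epsilon$ of each other, a set of volume of order $\epsilon^{3}$, and the integral diverges. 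The correct sufficient condition is the joint one, $\gamma_1+\gamma_2+2kH_1^*(q)<3$, which does hold here because by~(\ref{e:sumH}) this sum equals $2(1-H_1)+2(1-H_2)<2$; this is precisely the point where the paper invokes the power counting theorem via Part~(1) of Lemma~\ref{lem:cv}. (The shifted integrals for $|i-j|=1$ are unproblematic: there the cycle is broken and individual integrability suffices, as in Part~(2) of Lemma~\ref{lem:cv}.) Once you replace your pairwise criterion by this joint one, the rest of your argument goes through and matches the paper's proof.
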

\begin{remark}\label{rem:gene2}
The proof of~(\ref{e:UB:VN3q}) is based on the fact that, because  $Z$ is the sum of two Hermite processes of consecutive orders, then $V^{(3,q)}_N$ has a centered Gaussian term in its decomposition (\ref{e:V3N}) and this term turns out  to be the leading term.  We can then deduce its asymptotic behavior from that of its variance. Since the variance of a simple Wiener--It\^{o} integral is related to the $L^2$--norm of the integrand, we obtain (\ref{e:UB:VN3q}). Note that this proof does not extend to the case where $Z$ is the sum of two Hermite processes of order $q_1,q_2$ with  $q_2-q_1>1$. In that case,  there is no Gaussian term in the sum $V^{(3)}_N$ and hence no Gaussian leading term. Thus, Proposition~\ref{pro:NT} cannot be extended in a simple way to more general cases.
\end{remark}
\begin{proof}
  We use the notation of Proposition~\ref{pro:WienerChaos}. If $n\geq 2$,
  we have by~(\ref{e:ineqf}), that $\mathbb{E}[I_n(f)^2]\leq n!\|f\|_2^2$ whereas
  in the case $n=1$, $f$ is trivially symmetric and this inequality becomes an
  equality. We first consider the case $k=0,\cdots,q-1$. By the integral
  definition~(\ref{e:VN3k}) of the terms $V_{N}^{(3,k)}$, we get that
\[
\mathbb{E}\left[\left|V_{N}^{(3,k)}\right|^{2}\right]\leq  M_1^{2}(k,q,H_1,H_2)\int_{\mathbb{R}^{2q+1-2k}}\left[\sum_{i,j=0}^{N-1}f_{N,i}^{(k)}(y)f_{N,j}^{(k)}(y)\right]\rmd y_{1}\cdots \rmd y_{2q+1-2k}\;,
\]
with
$$
M_1^2(k,q,H_1,H_2)=(2q+1-2k)!M^2(k,q,H_1,H_2).
$$
 Using the explicit expression of $f_{N,\ell}^{(k)}$ given for any $\ell=0,\cdots,N-1$ in Proposition~\ref{pro:WienerChaos}, we deduce that~:
\begin{equation}\label{e:ineq:1}
\mathbb{E}\left[\left|V_{N}^{(3,k)}\right| ^{2}\right]\leq  M_1^{2}(k,q,H_1,H_2)\int_{\mathbb{R}^{2q+1-2k}}\left[\sum_{i,j=0}^{N-1} g_{N,i,j}^{(q, k)}(y)\right]\rmd y_{1}\cdots \rmd y_{2q+1-2k}\;,
\end{equation}
with
\begin{eqnarray*}
&&g_{N,i,j}^{(q, k)}(y_1,\cdots,y_{2q+1-2k})\\
&=&\int_{u=t_{i}}^{t_{i+1}}\int_{v=t_{i}}^{t_{i+1}}\int_{u'=t_{j}}^{t_{j+1}}\int_{v'=t_{j}}^{t_{j+1}}\prod_{\ell=1}^{q-k}\left[(u-y_{\ell})_{+}(u'-y_{\ell})_{+}\right]^{-\left(\frac{1}{2}+\frac{1-H_{1}}{q}\right)}\\
&&\times\prod_{\ell=q-k+1}^{2q-2k+1}\left[(v-y_{\ell})_{+}(v'-y_{\ell})_{+}\right]^{-\left(\frac{1}{2}+\frac{1-H_{2}}{q+1}\right)}\\
&&\times\left[\widetilde{\beta}_{-(\frac{1}{2}+ \frac{1-H_{1}}{q}),-(\frac{1}{2}+\frac{1-H_{2}}{q+1})}(u,v)\widetilde{\beta}_{-(\frac{1}{2}+ \frac{1-H_{1}}{q}),-(\frac{1}{2}+\frac{1-H_{2}}{q+1})}(u',v')\vert u-v\vert^{-H_1^*(q)}\vert u'-v'\vert^{-H_1^*(q)}\right]^k\rmd u\rmd v\rmd u'\rmd v'\;.
\end{eqnarray*}
On the other hand, equality~(\ref{e:int:beta:tilde:2}) of Lemma~\ref{lem:beta:tilde} implies that for any $\ell=1,\cdots,q-k$
\[
\int_{y_\ell\in\mathbb{R}}(u-y_{\ell})_{+}^{-(\frac{1}{2}+\frac{1-H_{1}}{q})}(u'-y_{\ell})_{+}^{-(\frac{1}{2}+ \frac{1-H_{1}}{q})}\rmd y_\ell=\beta(a_1+1,-2a_1-1)|u-u'|^{-\frac{(2-2H_{1})}{q}}\;,
\]
with $a_1=-(1/2+(1-H_1)/q)$ and that for any $\ell=q-k+1,\cdots,2q-2k+1$
\[
\int_{y_\ell\in\mathbb{R}}(v-y_{\ell})_{+}^{-(\frac{1}{2}+\frac{1-H_{2}}{q+1})}(v'-y_{\ell})_{+}^{-(\frac{1}{2}+\frac{1-H_{2}}{q+1})}\rmd y_\ell=\beta(a_2+1,-2a_2-1)\vert v-v'\vert^{-\frac{(2-2H_{2})}{q+1}}\;,
\]
with $a_2=-(1/2+(1-H_2)/(q+1))$. Hence, combining the Fubini theorem, inequality~(\ref{e:ineq:1}) and these two last equalities implies that for some $M_2(k,q,H_1,H_2)>0$
\begin{eqnarray*}
\mathbb{E}\left[\left(V_{N}^{(3,k)}\right)^{2}\right] &\leq &
M_2^{2}(k,q,H_1,H_2)\sum_{i,j=0}^{N-1}
\int_{u=t_{i}}^{t_{i+1}}\int_{v=t_{i}}^{t_{i+1}}\int_{u'=t_{j}}^{t_{j+1}}\int_{v'=t_{j}}^{t_{j+1}}
h(u,u',v,v')\rmd u\rmd v\rmd u'\rmd v' \;,
\end{eqnarray*}
with
%\begin{eqnarray*}
%h(u,u',v,v')&=&\left[\vert u-u'\vert^{-\frac{2-2H_{1}}{q}}\right]^{q-k}\left[\vert v-v'\vert^{-\frac{2-2H_{2}}{q+1}}\right]^{q-k+1}\\
%&&\times\left[\widetilde{\beta}_{-(\frac{1}{2}+ \frac{1-H_{1}}{q}),-(\frac{1}{2}+\frac{1-H_{2}}{q+1})}(u,v)\widetilde{\beta}_{-(\frac{1}{2}+ \frac{1-H_{1}}{q}),-(\frac{1}{2}+\frac{1-H_{2}}{q+1})}(u',v')\vert u-v\vert^{-H_1^*(q)}\vert u'-v'\vert^{- H_1^*(q)}\right]^k\;,
%\end{eqnarray*}

$$
h(u,u',v,v') = \left[\vert u-u'\vert^{-\frac{2-2H_{1}}{q}}\right]^{q-k}\left[\vert v-v'\vert^{-\frac{2-2H_{2}}{q+1}}\right]^{q-k+1}
$$
$$
\times\left[\widetilde{\beta}_{-(\frac{1}{2}+ \frac{1-H_{1}}{q}),-(\frac{1}{2}+\frac{1-H_{2}}{q+1})}(u,v)\widetilde{\beta}_{-(\frac{1}{2}+ \frac{1-H_{1}}{q}),-(\frac{1}{2}+\frac{1-H_{2}}{q+1})}(u',v')\vert u-v\vert^{-H_1^*(q)}\vert u'-v'\vert^{- H_1^*(q)}\right]^k\;,
$$

and
\[
M_2^2(k,q,H_1,H_2)=M_1(k,q,H_1,H_2)^2\beta(a_1+1,-2a_1-1)^{q-k}\beta(a_2+1,-2a_2-1)^{q-k+1}\;.
\]
Recall that $t_{i}= i\gamma _{N}$ and use the change of variables
\[
U=\gamma _{N}^{-1}(u-i\gamma _{N}),\,V=\gamma _{N}^{-1}(v-i\gamma _{N}),\,U'=\gamma _{N}^{-1}(u'-j\gamma _{N}),\,V'=\gamma _{N}^{-1}(v'-j\gamma _{N})\;.
\]
We have
\begin{eqnarray*}
\mathbb{E}\left[\left(V_{N}^{(3,k)}\right)^{2}\right] &\leq & c\gamma_{N}^{-2k H_1^*(q)}\gamma_{N}^{-\frac{(q-k)(2-2H_{1})}{q}}\gamma_{N}^{-\frac{(q-k+1)(2-2H_{2})}{q+1}}\gamma_{N}^{4}\\
&&\times\sum_{i,j=0}^{N-1} \int_{[0,1] ^{4}}H_{i,j}(U,U',V,V')\rmd U\rmd V\rmd U'\rmd V'\;,
\end{eqnarray*}
with
\begin{eqnarray}
H_{i,j}(U,U',V,V')&=&\left[\vert U-U'+i-j\vert^{-\frac{2(1-H_{1})}{q}}\right]^{q-k}\left[\vert V-V'+i-j\vert^{-\frac{2(1-H_{2})}{q+1}}\right]^{q-k+1}\label{e:Hij}\\
&&\times\left(\widetilde{\beta}_{-(\frac{1}{2}+ \frac{1-H_{1}}{q}),-(\frac{1}{2}+\frac{1-H_{2}}{q+1})}(U,V)\widetilde{\beta}_{-(\frac{1}{2}+ \frac{1-H_{1}}{q}),-(\frac{1}{2}+\frac{1-H_{2}}{q+1})}(U',V')\right)^{k}\\
&&\times\vert U-V\vert^{-kH_1^*(q)}\cdot\vert U'-V'\vert^{-kH_1^*(q)}\nonumber\;,
\end{eqnarray}
since $\widetilde{\beta}_{a,b}(u,v)$ only depends on the sign of $u-v$. Now we simplify the expression involving powers of $\gamma_{N}$. Since
\begin{eqnarray}
&&-2k H_1^*(q)-\frac{(q-k)(2-2H_{1})}{q}-\frac{(q-k+1)(2-2H_{2})}{q+1}+4\nonumber\\
&=&-2k\frac{1-H_{1}}{q}-2k\frac{1-H_{2}}{q+1}-\frac{(q-k)(2-2H_{1})}{q}-\frac{(q-k+1)(2-2H_{2})}{q+1}+4\nonumber\\
&=&\left(-\frac{2q}{q}-\frac{2(q+1)}{q+1}+4\right)+H_1\left(\frac{2k}{q}+\frac{2(q-k)}{q}\right)+H_2\left(\frac{2k}{q+1}+\frac{2(q-k+1)}{q+1}\right)\nonumber\\
&=&2H_1+2H_2\label{e:sumH}\;,
\end{eqnarray}
we deduce that
\begin{equation}\label{e:majo}
\mathbb{E}\left[\left(V_{N}^{(3,k)}\right)^{2}\right] \leq  c \gamma_{N}^{2H_1+2H_2}\sum_{i,j=0}^{N-1}\int_{[0,1]^{4}}H_{i,j}(U,U',V,V')\rmd U \rmd V\rmd U'\rmd V'\;.
\end{equation}
To obtain~(\ref{e:UB:VN3k}), we check that we can apply %Part~(1) of
Lemma~\ref{lem:riemman:sum} below with
\[
\alpha_1=\frac{2(q-k)(1-H_{1})}{q},\,\alpha_2=\frac{2(q-k+1)(1-H_{2})}{q+1}\;,
\]
and
\[
F(U,V)=\left[\widetilde{\beta}_{-(\frac{1}{2}+ \frac{1-H_{1}}{q}),-(\frac{1}{2}+\frac{1-H_{2}}{q+1})}(U,V)\vert U-V\vert^{-H_1^*(q)}\right]^{k}\;.
\]
Since $k H_1^*(q)\leq k(2q+1)/(2q(q+1))<1$, Condition~(\ref{e:cvF}) holds.

It
remains to check Condition~(\ref{e:cvFp}). Note that $\widetilde{\beta}_{H_1^*(q),H_2^*(q)}$ is
bounded and then, for some $C>0$,
\[
|F(U,V)|\leq C\,\vert U-V\vert^{-kH_1^*(q)}\;.
\]
We deduce the finiteness of the integrals
\[
\int_{[0,1]^4}|U-U'+\ell|^{-\alpha_1}|V-V'+\ell|^{-\alpha_2}F(U,V)F(U',V')\rmd U\rmd U'\rmd V\rmd V'\;,
\]
as follows~:
\begin{enumerate}
\item if $\ell=0$, we observe that $\alpha_1,\alpha_2,kH_1^*(q)\in (0,1)$ and by~(\ref{e:sumH}),
\[
\alpha_1+\alpha_2+2kH_1^*(q)=2(1-H_1)+2(1-H_2)<3
\]
We then apply Part (1) of Lemma~\ref{lem:cv}.
\item if $\ell=1$ or $\ell=-1$, we observe that $\alpha_1,\alpha_2,kH_1^*(q)\in (0,1)$ and apply Part (2) of Lemma~\ref{lem:cv}.
\item if $|\ell|\geq 2$, we observe that on $[0,1]^2$,
\[
|U-U'+\ell|^{-\alpha_1}|V-V'+\ell|^{-\alpha_2}\leq ||\ell|-1|^{-\alpha_1-\alpha_2}\left(\int_{[0,1]}\vert U-V\vert^{-kH_1^*(q)}\rmd U\rmd V\right)^2<\infty\;,
\]
since $kH_1^*(q)<1$.
\end{enumerate}
This completes the proof of inequality~(\ref{e:UB:VN3k}) in the case $k\in\{0,\cdots,q-1\}$.

Now we consider the case where $k=q$. The approach is exactly the same except that Inequality~(\ref{e:majo}) becomes an equality because in~(\ref{e:VN3k}) $I_{2q+1-2k}=I_1$ becomes a Gaussian integral. One then has
\begin{equation}\label{e:equal}
\mathbb{E}\left[\left(V_{N}^{(3,q)}\right)^{2}\right] = c \gamma_{N} ^{2H_1+2H_2}\sum_{i,j=0}^{N-1}\int_{[0,1]^{4}}H_{i,j}(U,U',V,V')\rmd U \rmd V\rmd U'\rmd V'\;,
\end{equation}
with (see~\ref{e:Hij}),
\begin{eqnarray*}
H_{i,j}(U,U',V,V')&=&\left[\vert V-V'+i-j\vert^{-\frac{2(1-H_{2})}{q+1}}\right]\left[\widetilde{\beta}_{-(\frac{1}{2}+ \frac{1-H_{1}}{q}),-(\frac{1}{2}+\frac{1-H_{2}}{q+1})}(U,V)\vert U-V\vert^{-H_1^*(q)}\right]^q\\
&&\times\left[\widetilde{\beta}_{-(\frac{1}{2}+ \frac{1-H_{1}}{q}),-(\frac{1}{2}+\frac{1-H_{2}}{q+1})}(U',V')\vert U'-V'\vert^{-H_1^*(q)}\right]^q\;.
\end{eqnarray*}

To conclude, we now apply Part~(2) of Lemma~\ref{lem:riemman:sum} with
\[
\alpha_1=0,\,\alpha_2=\frac{2(1-H_{2})}{q+1}\;,
\]
and
\[
F(u,v)=\left[\widetilde{\beta}_{H_1^*(q),H_2^*(q)}(U,V)\vert U-V\vert^{-H_1^*(q)}\right]^q\;,
\]
Since $\alpha_1+\alpha_2<1$, the equality~(\ref{e:UB:VN3q}) follows. Observe
that $\alpha$ in~(\ref{eq:alpha}) decreases with $k$. Therefore
the leading term of the sum~(\ref{e:WienerChaos}) is obtained for $k=q$, that is, the summand in
the first Wiener chaos. Finally, observe that since this term is Gaussian,
convergence of the variance implies convergence in distribution. This completes
the proof of Proposition~\ref{pro:NT} and hence of Theorem~\ref{thm:VN3}.
\end{proof}

\section{Technical lemmas}\label{s:lemmas}
\begin{lemma}\label{lem:beta:tilde}
Consider the special function $\beta$ defined for any $x,y>0$ as
\[
\beta(x,y)=\int_0^1 t^{x-1}(1-t)^{y-1}\rmd t\;.
\]
Define on $\mathbb{R}^2\setminus\{(u,v),u=v\}$,  for any $a,b>-1$ such that $a+b<-1$ the function $\widetilde{\beta}_{a,b}$ as~:
\begin{equation}\label{e:beta:tilde}
\widetilde{\beta}_{a,b}(u,v)=\left\{\begin{array}{l}\beta(a+1,-1-a-b)\mbox{ if }u<v,\\
\beta(b+1,-1-a-b)\mbox{ if }v<u\;.\end{array}\right.
\end{equation}
Then
\begin{equation}\label{e:int:beta:tilde}
\int_{-\infty} ^{u\wedge v}(u-s)^a(v-s)^b\rmd s=\widetilde{\beta}_{a,b}(u,v)|u-v|^{a+b+1}\;.
\end{equation}
In particular,
\begin{equation}\label{e:int:beta:tilde:2}
\int_{-\infty} ^{u\wedge v}(u-s)^a(v-s)^b\rmd s\leq C(a,b)|u-v|^{a+b+1}\;,
\end{equation}
with
\begin{equation}\label{e:CB}
C(a,b)=\sup_{(u,v)\in\mathbb{R}^2}\left[\widetilde{\beta}_{a,b}(u,v)\right]<\infty\;.
\end{equation}
\end{lemma}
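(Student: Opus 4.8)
The plan is to prove the integral identity~(\ref{e:int:beta:tilde}) by a chain of elementary changes of variables that reduces the left-hand side to a classical Beta integral, and then to read off~(\ref{e:int:beta:tilde:2})--(\ref{e:CB}) from the fact that $\widetilde{\beta}_{a,b}$ takes only two values.

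First I would reduce to one case. The integrand is symmetric under the simultaneous exchange $(u,a)\leftrightarrow(v,b)$, so it suffices to treat $u<v$, in which case $u\wedge v=u$; the case $v<u$ then follows by swapping the roles of $a$ and $b$, which is exactly why $\widetilde{\beta}_{a,b}$ is defined by the two lines in~(\ref{e:beta:tilde}). Assuming $u<v$, the substitution $s=u-w$ turns the integral into $\int_0^{\infty}w^{a}\,(w+(v-u))^{b}\,\rmd w$, and then the rescaling $w=(v-u)r$ extracts the homogeneity:
\[
\int_{-\infty}^{u}(u-s)^{a}(v-s)^{b}\,\rmd s=(v-u)^{a+b+1}\int_0^{\infty}r^{a}(1+r)^{b}\,\rmd r\;.
\]

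It remains to identify the last integral. With $r=t/(1-t)$, so that $1+r=1/(1-t)$ and $\rmd r=(1-t)^{-2}\,\rmd t$, one obtains $\int_0^{\infty}r^{a}(1+r)^{b}\,\rmd r=\int_0^1 t^{a}(1-t)^{-a-b-2}\,\rmd t=\beta(a+1,-1-a-b)$. The two Beta exponents $a+1$ and $-1-a-b$ are strictly positive precisely because $a>-1$ and $a+b<-1$, which are the standing hypotheses; this is the only point where the parameter range matters, and it guarantees convergence at both endpoints. Substituting back, and using $v-u=|u-v|$ when $u<v$, gives exactly $\widetilde{\beta}_{a,b}(u,v)\,|u-v|^{a+b+1}$ in this case, and $v<u$ follows by the symmetry noted above. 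This establishes~(\ref{e:int:beta:tilde}).

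Finally, for~(\ref{e:int:beta:tilde:2}) and~(\ref{e:CB}): by construction $\widetilde{\beta}_{a,b}(u,v)\in\{\beta(a+1,-1-a-b),\,\beta(b+1,-1-a-b)\}$ for every $(u,v)$ with $u\neq v$, and both of these numbers are finite and positive under the assumptions on $a,b$. Hence $C(a,b)=\max(\beta(a+1,-1-a-b),\beta(b+1,-1-a-b))<\infty$, and~(\ref{e:int:beta:tilde:2}) is immediate from~(\ref{e:int:beta:tilde}). There is no genuine obstacle in this proof: it is a sequence of standard substitutions, and the only thing requiring attention is bookkeeping the conditions $a>-1$ and $a+b<-1$ that make the Beta integral converge.
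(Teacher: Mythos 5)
Your proof is correct and follows essentially the same route as the paper: reduce to the case $u<v$ by symmetry, rescale by $v-u$ to extract the factor $|u-v|^{a+b+1}$, and identify $\int_0^\infty r^a(1+r)^b\,\rmd r$ with $\beta(a+1,-1-a-b)$ (the paper invokes the representation $\beta(x,y)=\int_0^\infty t^{x-1}(1+t)^{-x-y}\,\rmd t$ directly, whereas you derive it via $r=t/(1-t)$; this is an immaterial difference). The treatment of $C(a,b)$ as the maximum of the two Beta values also matches the paper.
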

\begin{proof}
We use the equivalent definition of function $\beta$
\begin{equation}\label{e:beta:2}
\beta(x,y)=\int_0^{\infty} \frac{t^{x-1}}{(1+t)^{x+y}}\rmd t\;.
\end{equation}
Consider first the case where $u<v$. In the integral $\int_{-\infty} ^{u}(u-s)^a(v-s)^b\rmd s$, we set $s'=(u-s)/(v-u)$. We get
\begin{eqnarray*}
\int_{-\infty} ^{u}(u-s)^a(v-s)^b\rmd s&=&\int_{0}^{\infty}\left[(v-u)^a(s')^a\right]\left[(v-u)^b(1+s')^b\right](v-u)\rmd s'\\
&=&(v-u)^{a+b+1}\int_{0}^{\infty} (s')^{a}(1+s')^b\rmd s'\;.
\end{eqnarray*}
Hence, in view of~(\ref{e:beta:2}), we deduce that
\[
\int_{-\infty} ^{u\wedge v}(u-s)^a(v-s)^b\rmd s=(v-u)^{a+b+1}\beta(x,y)
\]
with $x-1=a$ and $x+y=-b$. This implies~(\ref{e:int:beta:tilde}) in the case
$u<v$. The other case $v<u$ is obtained by symmetry.

The finiteness of the constant $C(a,b)$ results from the fact that by definition of $\widetilde{\beta}$,
\[
\sup_{(u,v)\in\mathbb{R}^2}\widetilde{\beta}(u,v)=\max(\beta(a+1,-1-a-b),\beta(b+1,-1-a-b))\;,
\]
which is finite (since $\beta(x,y)$ is finite for each $x,y>-1$).
\end{proof}
\begin{lemma}\label{lem:riemman:sum}
Let $\alpha_1,\alpha_2\in [0,1)$, $F$ a function defined from $[0,1]^2$ to $\mathbb{R}_+^*$ such that
\begin{equation}\label{e:cvF}
\gamma=\int_{[0,1]^2}F(U,V)\rmd U \rmd V<\infty\;.
\end{equation}
and for any $\ell\in\mathbb{Z}$
\begin{equation}\label{e:cvFp}
\Delta(\ell)=\int_{[0,1]^{4}}\left[\left|U-U'+\ell\right|^{-\alpha_1}\left|V-V'+\ell\right|^{-\alpha_2}\right]F(U,V)F(U',V')\rmd U \rmd V\rmd U'\rmd V'<\infty\;.
\end{equation}
Then
\begin{enumerate}
\item if $\alpha_1+\alpha_2\geq1$, there exists some $C>0$ such that
\begin{equation}\label{e:ineq:lem:2}
\sum_{i,j=0}^{N-1}\Delta(i-j)\leq CN\log(N)^{\varepsilon(\alpha_1+\alpha_2)}\;,
\end{equation}
where $\varepsilon$ has been defined in~(\ref{e:varepsilon}).
\item if $\alpha_1+\alpha_2< 1$, we have
\begin{equation}\label{e:lim:lem:2}
\lim_{N\to\infty}\left(\frac{N^{\alpha_1+\alpha_2}}{N^2}\left[\sum_{i,j=0}^{N-1}\Delta(i-j)\right]\right)= \frac{2\gamma^2}{(1-\alpha_1-\alpha_2)(2-\alpha_1-\alpha_2)}\;.
%\left(\int_{s_2=0}^1\int_{s_1=0}^{1}|s_1-s_2|^{-\alpha_1-\alpha_2}\rmd s_1\rmd s_2\right)\gamma^2\;.
\end{equation}
\end{enumerate}
\end{lemma}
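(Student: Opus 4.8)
The plan is to reduce the double sum over the square $\{0,\dots,N-1\}^2$ to a single weighted sum over the difference $\ell=i-j$, and then to pin down the behaviour of $\Delta(\ell)$ as $|\ell|\to\infty$. Since the number of pairs $(i,j)\in\{0,\dots,N-1\}^2$ with $i-j=\ell$ equals $N-|\ell|$, one has
\[
\sum_{i,j=0}^{N-1}\Delta(i-j)=\sum_{\ell=-(N-1)}^{N-1}\bigl(N-|\ell|\bigr)\Delta(\ell)\;.
\]
Write $s=\alpha_1+\alpha_2\in[0,2)$. All of $\Delta(0),\Delta(\pm1),\dots$ are finite by~(\ref{e:cvFp}), and $F\ge0$, so everything in sight is nonnegative.

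The key input is the asymptotic equivalence $\Delta(\ell)\sim\gamma^2\,|\ell|^{-s}$ as $|\ell|\to\infty$, together with the crude bound $\Delta(\ell)\le 2^{s}\gamma^2\,|\ell|^{-s}$ for $|\ell|\ge 2$. Both follow from the elementary observation that for $(U,U',V,V')\in[0,1]^4$ and $|\ell|\ge2$ one has $|\ell|-1\le|U-U'+\ell|\le|\ell|+1$ and similarly for $|V-V'+\ell|$, so the ratio
\[
\frac{|U-U'+\ell|^{-\alpha_1}\,|V-V'+\ell|^{-\alpha_2}}{|\ell|^{-s}}
\]
is bounded by $2^{s}$ and tends to $1$ uniformly on $[0,1]^4$ as $|\ell|\to\infty$. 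Multiplying by $F(U,V)F(U',V')$, which is integrable on $[0,1]^4$ with integral $\gamma^2$ by~(\ref{e:cvF}), and invoking dominated convergence gives $\Delta(\ell)/|\ell|^{-s}\to\gamma^2$; the upper bound is immediate from $|\ell|-1\ge|\ell|/2$.

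For Part~(1), i.e. $s\ge1$, I bound $\sum_{\ell=-(N-1)}^{N-1}(N-|\ell|)\Delta(\ell)\le N\sum_{|\ell|\le N-1}\Delta(\ell)$, split the inner sum into the contribution of $|\ell|\le1$, which is finite by~(\ref{e:cvFp}), and of $2\le|\ell|\le N-1$, on which $\Delta(\ell)\le 2^{s}\gamma^2|\ell|^{-s}$. If $s>1$ the tail $\sum_{|\ell|\ge2}|\ell|^{-s}$ converges, so the inner sum is $O(1)$ and we get $O(N)$, matching $\varepsilon(s)=0$; if $s=1$ the inner sum is $O(\log N)$, giving $O(N\log N)$, matching $\varepsilon(s)=1$. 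This is~(\ref{e:ineq:lem:2}).

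For Part~(2), i.e. $s<1$, fix $\varepsilon>0$ and choose $L\ge2$ so that $|\Delta(\ell)-\gamma^2|\ell|^{-s}|\le\varepsilon|\ell|^{-s}$ whenever $|\ell|\ge L$. Decomposing $\sum_{\ell=-(N-1)}^{N-1}(N-|\ell|)\Delta(\ell)$ into the contribution of $|\ell|<L$, which is $O(N)$, and that of $L\le|\ell|\le N-1$, and replacing $\Delta(\ell)$ by $\gamma^2|\ell|^{-s}$ in the latter at a cost bounded in absolute value by $\varepsilon\sum_{1\le|\ell|\le N-1}(N-|\ell|)|\ell|^{-s}$, the problem reduces to the elementary estimate
\[
\sum_{\ell=1}^{N-1}(N-\ell)\,\ell^{-s}=N\sum_{\ell=1}^{N-1}\ell^{-s}-\sum_{\ell=1}^{N-1}\ell^{1-s}\sim\frac{N^{2-s}}{(1-s)(2-s)}\;,
\]
which follows from $\sum_{\ell=1}^{N-1}\ell^{-s}\sim N^{1-s}/(1-s)$ and $\sum_{\ell=1}^{N-1}\ell^{1-s}\sim N^{2-s}/(2-s)$. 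Multiplying through by $N^{s-2}$: the $O(N)$ terms (including the correction from restricting to $|\ell|\ge L$) vanish, the main $\gamma^2$-term converges to $2\gamma^2/((1-s)(2-s))$ (the factor $2$ accounting for $\pm\ell$), and the $\limsup$ of the error term is at most $2\varepsilon/((1-s)(2-s))$. Letting $\varepsilon\downarrow0$ yields~(\ref{e:lim:lem:2}). The one point requiring care is the equivalence $\Delta(\ell)\sim\gamma^2|\ell|^{-s}$: the possibly genuine singularities of $F$ — as in the application of this lemma in the proof of Proposition~\ref{pro:NT}, where $F(U,V)$ behaves like $|U-V|^{-kH_1^*(q)}$ — must be kept entirely inside the dominating function, only the harmless displacement factors $|U-U'+\ell|^{-\alpha_1}|V-V'+\ell|^{-\alpha_2}$ being controlled uniformly; once this is secured, Part~(1) is a one-line tail estimate and Part~(2) is the Abel summation above, the only subtlety there being that the $\varepsilon$-perturbation is dominated by a sum of the same order $N^{2-s}$ as the leading term, which is exactly what makes the two-sided bound close as $\varepsilon\to0$.
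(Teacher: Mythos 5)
Your proof is correct and follows essentially the same route as the paper: both reduce the double sum to a weighted single sum over $\ell=i-j$ and exploit the uniform bounds $|\ell|-1\le|U-U'+\ell|\le|\ell|+1$ to control $\Delta(\ell)$ for $|\ell|\ge2$. The only presentational difference is that you extract the asymptotic equivalence $\Delta(\ell)\sim\gamma^2|\ell|^{-s}$ and run an $\varepsilon$-argument, whereas the paper sandwiches $\Delta(\ell)$ between $\gamma^2(\ell+1)^{-s}$ and $\gamma^2(\ell-1)^{-s}$ and checks that both sides yield the same limit; these are interchangeable.
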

\begin{proof}
We first observe that $\Delta(\ell)=\Delta(-\ell)$ for all $\ell\in\mathbb{Z}$
and thus
\begin{equation}
  \label{eq:SumDelta}
  \sum_{i,j=0}^{N-1}\Delta(i-j)=N\,\left[\Delta(0)+2\sum_{\ell=1}^{N-1}(1-\ell/N)\;\Delta(\ell)\right]\;.
\end{equation}
Note that for all $\ell\geq2$ and $U,U',V,V'\in [0,1]^4$, we have
$$
\ell-1\leq |U-U'+\ell|\leq \ell+1\;.
$$
Hence, for all $\ell\geq2$,
\begin{equation}
  \label{eq:EquivDelta}
\gamma^2\;(\ell+1)^{-\alpha_1-\alpha_2}\leq \Delta(\ell)\leq \gamma^2\;(\ell-1)^{-\alpha_1-\alpha_2}\;.
\end{equation}
We now consider two cases.

\medskip

\noindent{\it Case (1)~:} Suppose $\alpha_1+\alpha_2\geq
1$. We get from~(\ref{eq:SumDelta}) and~(\ref{eq:EquivDelta}) that
$$
\sum_{i,j=0}^{N-1}\Delta(i-j)\leq
N \left[\Delta(0)+2\Delta(1)+2\gamma^2\sum_{\ell=2}^{N-1}(\ell-1)^{-\alpha_1-\alpha_2}\right] = O\left(N\log(N)^{\varepsilon(\alpha_1+\alpha_2)}\right)\;.
$$
The bound~(\ref{e:ineq:lem:2}) follows.

\medskip

\noindent{\it Case (2)~:} We now assume that $\alpha_1+\alpha_2<1$. In this
case, using that, as $N\to\infty$,
\begin{align*}
\sum_{\ell=2}^{N-1}(\ell+1)^{-\alpha_1-\alpha_2}  &=
\int_1^Nu^{-\alpha_1-\alpha_2} \rmd u + O(1) \;,\\
\sum_{\ell=2}^{N-1}\ell(\ell+1)^{-\alpha_1-\alpha_2}  &=
\int_1^N(u-1)u^{-\alpha_1-\alpha_2} \rmd u + O(1)\;,
\end{align*}
we get
\begin{align*}
\sum_{\ell=2}^{N-1}(1-\ell/N)\;(\ell+1)^{-\alpha_1-\alpha_2}
& = \int_1^Nu^{-\alpha_1-\alpha_2} \rmd u -\frac1N\int_1^N(u-1)u^{-\alpha_1-\alpha_2} \rmd u + O(1)\\
& =  \frac{N^{1-\alpha_1-\alpha_2}}{1-\alpha_1-\alpha_2}-\frac{N^{1-\alpha_1-\alpha_2}}{2-\alpha_1-\alpha_2}+O(1)\\
& \sim
 \frac{N^{1-\alpha_1-\alpha_2}}{(1-\alpha_1-\alpha_2)(2-\alpha_1-\alpha_2)}\;.
\end{align*}
Similarly, using instead that, as $N\to\infty$,
\begin{align*}
\sum_{\ell=2}^{N-1}(\ell-1)^{-\alpha_1-\alpha_2}  &=
\int_1^Nu^{-\alpha_1-\alpha_2} \rmd u + O(1) \;,\\
\sum_{\ell=2}^{N-1}\ell(\ell-1)^{-\alpha_1-\alpha_2}  &=
\int_1^N(u+1)u^{-\alpha_1-\alpha_2} \rmd u + O(1)\;,
\end{align*}
we get the same asymptotic equivalence, namely,
$$
\sum_{\ell=2}^{N-1}(1-\ell/N)\;(\ell-1)^{-\alpha_1-\alpha_2}\sim
 \frac{N^{1-\alpha_1-\alpha_2}}{(1-\alpha_1-\alpha_2)(2-\alpha_1-\alpha_2)}\;.
$$
Hence, with~(\ref{eq:SumDelta}) and~(\ref{eq:EquivDelta}), we get~(\ref{e:lim:lem:2}).
\end{proof}
\begin{lemma}\label{lem:cv}
Let $\alpha_1,\alpha_2,\alpha_3,\alpha_4\in (0,1)$.
\begin{enumerate}
\item Assume that
\[
\alpha_1+\alpha_2+\alpha_3+\alpha_4<3\;.
\]
Then
\begin{equation}\label{e:int1}
\int_{[0,1]^4} |u_1-u_{2}|^{-\alpha_1}|u_2-u_{3}|^{-\alpha_2}|u_3-u_{4}|^{-\alpha_3}|u_4-u_{1}|^{-\alpha_i}\rmd u_1\rmd u_2\rmd u_3\rmd u_4\;,
\end{equation}
is finite.
\item Let $\varepsilon\in\{-1,1\}$, then,
\begin{equation}\label{e:int2}
\int_{[0,1]^4}|u_1-u_{2}+\varepsilon|^{-\alpha_1}|u_2-u_{3}+\varepsilon|^{-\alpha_2}|u_3-u_{4}|^{-\alpha_3}|u_4-u_{1}|^{-\alpha_i}\rmd u_1\rmd u_2\rmd u_3\rmd u_4\;,
\end{equation}
is finite.
\end{enumerate}
\end{lemma}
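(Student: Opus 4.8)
The plan is to deduce both finiteness statements from a single one‑dimensional estimate applied iteratively. First I would establish the following bounded‑interval analogue of Lemma~\ref{lem:beta:tilde}: for $\alpha,\beta\in(0,1)$, $a,b\in[-2,2]$ and any fixed $\delta>0$,
\begin{equation*}
\int_0^1|t-a|^{-\alpha}|t-b|^{-\beta}\,\rmd t\;\leq\;C_{\alpha,\beta,\delta}\;\psi_{\alpha+\beta}(|a-b|)\,,\qquad \psi_{\gamma}(r):=(1\wedge r)^{-(\gamma-1)_{+}-\delta}\,,
\end{equation*}
where $x_+=\max(x,0)$. This follows from the routine splitting of $[0,1]$ into $\{|t-a|\le|a-b|/2\}$, $\{|t-b|\le|a-b|/2\}$ and the complement, on which $|t-a|^{-\alpha}|t-b|^{-\beta}$ is dominated by $|t-a|^{-\alpha-\beta}$ or $|t-b|^{-\alpha-\beta}$; the three regimes that appear (bounded when $\alpha+\beta<1$, logarithmic when $\alpha+\beta=1$, a negative power when $\alpha+\beta>1$) are all absorbed into $\psi_{\alpha+\beta}$ at the cost of the arbitrarily small $\delta$. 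Note that $\psi_\gamma$ is integrable on every $[0,M]$ as soon as $\gamma<2$ and $\delta$ is small.

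For Part~(1), I would integrate out the two \emph{opposite} variables $u_1$ and $u_3$ of the cycle $1$–$2$–$3$–$4$–$1$. The variable $u_1$ occurs only in $|u_1-u_2|^{-\alpha_1}$ and $|u_4-u_1|^{-\alpha_4}$, and $u_3$ only in $|u_2-u_3|^{-\alpha_2}$ and $|u_3-u_4|^{-\alpha_3}$, so applying the one‑dimensional estimate twice bounds~(\ref{e:int1}) by
\begin{equation*}
C\int_{[0,1]^{2}}\psi_{\alpha_1+\alpha_4}(|u_2-u_4|)\,\psi_{\alpha_2+\alpha_3}(|u_2-u_4|)\,\rmd u_2\,\rmd u_4\;\leq\;C\int_0^1 r^{-(\alpha_1+\alpha_4-1)_+-(\alpha_2+\alpha_3-1)_+-2\delta}\,\rmd r\,.
\end{equation*}
This is finite once $(\alpha_1+\alpha_4-1)_++(\alpha_2+\alpha_3-1)_+<1$ and $\delta$ is small enough, and a short case check gives this inequality under the hypotheses: if both $\alpha_1+\alpha_4$ and $\alpha_2+\alpha_3$ exceed $1$, the left‑hand side equals $\alpha_1+\alpha_2+\alpha_3+\alpha_4-2<1$; if exactly one does, it is at most $\alpha_i+\alpha_j-1<1$ because $\alpha_i,\alpha_j<1$; if neither does, it is $0$. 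This is the only place the assumption $\alpha_1+\alpha_2+\alpha_3+\alpha_4<3$ enters, and it is exactly what this argument requires.

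For Part~(2), the same reduction works: $u_1$ occurs only in $|u_1-u_2+\varepsilon|^{-\alpha_1}=|u_1-(u_2-\varepsilon)|^{-\alpha_1}$ and $|u_4-u_1|^{-\alpha_4}$, and $u_3$ only in $|u_3-(u_2+\varepsilon)|^{-\alpha_2}$ and $|u_3-u_4|^{-\alpha_3}$; since $u_2\pm\varepsilon\in[-2,2]$ the one‑dimensional estimate still applies, and integrating out $u_1,u_3$ bounds~(\ref{e:int2}) by
\begin{equation*}
C\int_{[0,1]^{2}}\psi_{\alpha_1+\alpha_4}(|u_2-u_4-\varepsilon|)\,\psi_{\alpha_2+\alpha_3}(|u_2-u_4+\varepsilon|)\,\rmd u_2\,\rmd u_4\,.
\end{equation*}
The key point is now geometric: since $|\varepsilon|=1$ one has $|u_2-u_4-\varepsilon|+|u_2-u_4+\varepsilon|\ge2$, so on the set where one of the two arguments is $<1$ the other is $\ge1$, hence the corresponding $\psi$‑factor is bounded. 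Splitting the square accordingly, the integral is at most $C\int_{[0,1]^2}\psi_{\alpha_2+\alpha_3}(|u_2-u_4+\varepsilon|)\,\rmd u_2\,\rmd u_4+C\int_{[0,1]^2}\psi_{\alpha_1+\alpha_4}(|u_2-u_4-\varepsilon|)\,\rmd u_2\,\rmd u_4$, and each term is finite because $\psi_\gamma$ is integrable on bounded intervals whenever $\gamma<2$, which holds here since $\alpha_i+\alpha_j<2$. No constraint on the sum of the four exponents is needed, in agreement with the hypothesis being only $\alpha_i\in(0,1)$.

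I expect the only genuine care to be the three‑regime bookkeeping in the one‑dimensional estimate (the small $\delta$ trivializes the critical logarithmic case) together with the sharp exponent accounting in Part~(1), which is precisely where $\sum_i\alpha_i<3$ is used; everything else is Fubini and affine changes of variables. (Throughout, the exponent written $\alpha_i$ in the last factor of~(\ref{e:int1}) and~(\ref{e:int2}) is understood to be $\alpha_4$.)
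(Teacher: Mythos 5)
Your proof is correct, but it takes a genuinely different route from the paper. The paper disposes of both parts in a few lines by invoking the power counting theorem of Taqqu and Terrin (Corollary~1 of their paper): for Part~(1) the only non-empty padded subset of $T=\{u_1-u_2,u_2-u_3,u_3-u_4,u_4-u_1\}$ is $T$ itself, and the convergence criterion reduces to $d_0(T)=\mathrm{rank}(T)-\sum_i\alpha_i=3-\sum_i\alpha_i>0$; for Part~(2) the shifts destroy the padding and convergence is automatic. You instead give a self-contained elementary argument: a one-dimensional estimate of the form $\int_0^1|t-a|^{-\alpha}|t-b|^{-\beta}\,\rmd t\leq C\,(1\wedge|a-b|)^{-(\alpha+\beta-1)_+-\delta}$, applied by Tonelli to the two opposite vertices $u_1,u_3$ of the cycle, which collapses the four-fold integral to a one-dimensional one in $|u_2-u_4|$; the case check on $(\alpha_1+\alpha_4-1)_++(\alpha_2+\alpha_3-1)_+<1$ is exactly where $\sum_i\alpha_i<3$ enters (and matches the paper's rank count $3$), while in Part~(2) the observation $|u_2-u_4-\varepsilon|+|u_2-u_4+\varepsilon|\geq2$ plays the role of the absence of padded subsets. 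I checked the three regimes of your one-dimensional lemma and the uniformity over $a,b\in[-2,2]$, and they are fine; the $\delta$-fudge correctly absorbs the critical logarithmic case $\alpha+\beta=1$. What each approach buys: the paper's is shorter and generalizes immediately to arbitrary graphs of singular factors, at the price of citing a nontrivial external theorem; yours is longer but entirely elementary, makes transparent exactly why the threshold is $3$ and why no constraint beyond $\alpha_i\in(0,1)$ is needed in Part~(2), and your reading of the typo $\alpha_i$ as $\alpha_4$ in the last factor agrees with the intended statement.
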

\begin{proof}
We shall apply the power counting theorem in~\cite{taqqu:terrin:1991}, in particular Corollary~1 of this paper. Since the exponents are $-\alpha_i>-1$, $i=1,\cdots,4$, we need only to consider non--empty padded subsets of the set
\[
T=\{u_1-u_2,\,u_2-u_3,\,u_3-u_4,u_4-u_1\}\;.
\]
A set $W\subset T$ is said to be ``padded'' if for every element $M$ in $W$, $M$ is also a linear combination of elements in $W\setminus\{M\}$. That is, $M$ can be obtained as linear combination of other elements in $W$.
Since $T$ above is the only non--empty padded set and since
\[
d_0(T)=\mathrm{rank}(T)+\sum_{T}(-\alpha_i)=3-\sum_{i=1}^4\alpha_i>0\;,
\]
we conclude that the integral~(\ref{e:int1}) converges. This completes the proof of Part (1) of Lemma~\ref{lem:cv}.

The proof of Part (2) of Lemma~\ref{lem:cv} is even simpler since there is no padded subsets of $T$ and thus the integral~(\ref{e:int2}) always converges.
\end{proof}
\appendix
\section{Multiple Wiener-It\^o Integrals }\label{s:appendix}
Let $B=(B_{t})_{t\in \mathbb{R}}$ be a classical Wiener process on a
probability space $\left(\Omega,{\mathcal{F}},\mathbf{P}\right)$. If
$f\in L^{2}(\mathbb{R}^{n})$ with $n\geq 1$ integer, we introduce the
multiple Wiener-It\^{o} integral of $f$ with respect to $B$. The
basic reference is the monograph~\cite{nualart:2006}. Let $f\in
{\mathcal{S}_{n}}$ be an elementary symmetric function with $n$ variables that
can be written as $ f=\sum_{i_{1},\ldots ,i_{n}}c_{i_{1},\ldots
,i_{n}}1_{A_{i_{1}}\times \ldots \times A_{i_{n}}}$, where the
coefficients satisfy $c_{i_{1},\ldots ,i_{n}}=0$ if two indexes
$i_{k}$ and $i_{l}$ are equal and the sets $A_{i}\in
{\mathcal{B}}(\mathbb{R})$ are pairwise disjoint. For  such a step function
$f$ we define
\begin{equation*}
I_{n}(f)=\sum_{i_{1},\ldots ,i_{n}}c_{i_{1},\ldots
,i_{n}}B(A_{i_{1}})\ldots B(A_{i_{n}})
\end{equation*}
where we put $B(A)=\int_{\mathbb{R}} 1_{A}(s)dB_{s}$. It can be seen that
the application $ I_{n}$ constructed above from ${\mathcal{S}}_{n}$
to $L^{2}(\Omega )$ is an isometry on ${\mathcal{S}}_{n}$  in the
sense
\begin{equation}
\mathbb{E}\left[ I_{n}(f)I_{m}(g)\right] =n!\langle
f,g\rangle_{L^{2}(T^{n})}\mbox{ if }m=n  \label{e:isom}
\end{equation}
and
\begin{equation*}
\mathbb{E}\left[ I_{n}(f)I_{m}(g)\right] =0\mbox{ if }m\not=n.
\end{equation*}
\noindent Since the set ${\mathcal{S}_{n}}$ is dense in
$L^{2}(\mathbb{R}^{n})$ for every $n\geq 1$ the mapping $ I_{n}$ can be
extended to an isometry from $L^{2}(\mathbb{R}^{n})$ to $L^{2}(\Omega)$ and
the above properties hold true for this extension.

One has $I_{n}(f)=I_{n}\left(\tilde{f}\right)$,
where $\tilde{f} $ denotes the symmetrization of $f$ defined by
$$\tilde{f}(x_{1}, \ldots , x_{n}) =\frac{1}{n!}
\sum_{\sigma}f(x_{\sigma (1) }, \ldots , x_{\sigma (n) } ),$$
$\sigma$ running over all permutations of $\left\{1,\cdots,n\right\}$. Thus
\begin{equation}\label{e:ineqf}
\mathbb{E}\left[I_n(f)^2\right]=\mathbb{E}\left[I_n(\tilde{f})^2\right]=n!\|\tilde{f}\|_2^2\leq n!\|f\|_2^2\;.
\end{equation}
We will need the general formula for calculating products of Wiener
chaos integrals of any orders $m,n$ for any symmetric integrands
$f\in L^{2}(\mathbb{R}^{m})$ and $g\in L^{2}(\mathbb{R}^{n})$, which is
\begin{equation}
I_{m}(f)I_{n}(g)=\sum_{k=0}^{m\wedge n}k!\binom{m}{k}\binom{n}{k}%
I_{m+n-2k}(f\otimes_{k}g) \label{e:product},
\end{equation}
where the contraction $f\otimes_{k}g$ is  defined by
\begin{eqnarray}
 &&(f\otimes_{k} g)(s_{1},\ldots, s_{m-k},t_{1},\ldots,t_{n-k})\nonumber\\
&& =\int_{\mathbb{R}^{k}}f(s_{1},\ldots,s_{m-k}, u_{1},
\ldots,u_{k})g(t_{1},\ldots,t_{n-k},u_{1},\ldots,u_{k})
\rmd u_{1}\ldots\rmd u_{k}\;. \label{e:contra}
\end{eqnarray}
Note that the contraction $(f\otimes_{k} g) $ is an element of
$L^{2}(\mathbb{R}^{m+n-2k})$ but it is not necessarily symmetric. We will
denote its symmetrization by $(f \tilde{\otimes}_{k} g)$.

\bigskip
\noindent {\bf Acknowledgments.}
M. Clausel's research was partially supported by the PEPS project \emph{AGREE} and LabEx \emph{PERSYVAL-Lab} (ANR-11-LABX-0025-01) funded by the French program Investissement d'avenir.
Francois Roueff's research was
partially supported by the ANR project \emph{MATAIM} NT09 441552. Murad~S.~Taqqu
was supported in part by the NSF grants DMS--1007616
and DMS-1309009 at Boston University.
Ciprian Tudor's research was  by the CNCS grant PN-II-ID-PCCE-2011-2-0015 (Romania).

\bibliographystyle{plain}
\bibliography{ciprian}
\end{document}